\documentclass[11pt]{amsart}
\usepackage{geometry}                % See geometry.pdf to learn the layout options. There are lots.
\geometry{a4paper,margin=1in}
\usepackage{graphicx}
\usepackage{amssymb}
\usepackage{epstopdf}
\usepackage{algorithm}
\usepackage{algorithmic}
\usepackage{dsfont}

\usepackage{amsthm}
\usepackage{tikz}
\usetikzlibrary{arrows,arrows.meta,backgrounds,calc,fit,decorations.pathreplacing,decorations.markings,shapes.geometric}
\tikzset{>=latex}

\DeclareGraphicsRule{.tif}{png}{.png}{`convert #1 `dirname #1`/`basename #1 .tif`.png}

\newcommand{\badpairs}{{\sf BadPairs}}
\newcommand{\badpoints}{{\sf BadPoints}}
\newcommand{\PRS}{\text{PRS for Hard Disks}}

\newcommand{\law}{\mathcal D}
\newcommand{\lr}{\lambda_r}
\newcommand{\lrd}{\lambda_{r,d}}
\newcommand{\lc}{\bar\lambda}

\newcommand{\Sbar}{\overline S}

\let\rho=\varrho

\newtheorem{theorem}{Theorem}

\newtheorem{lemma}[theorem]{Lemma}

%\newtheorem{remark}[theorem]{Remark}
%%% I think remarks should not be numbered. -Heng

%Referencing
%cref declarations
%\crefname{theorem}{Theorem}{Theorems}
%\crefname{observation}{Observation}{Observations}
%\crefname{claim}{Claim}{Claims}
%\crefname{condition}{Condition}{Conditions}
%\crefname{algorithm}{Algorithm}{Algorithms}
%\crefname{property}{Property}{Properties}
%\crefname{example}{Example}{Examples}
%\crefname{fact}{Fact}{Facts}
%\crefname{lemma}{Lemma}{Lemmas}
%\crefname{corollary}{Corollary}{Corollaries}
%\crefname{definition}{Definition}{Definitions}
%\crefname{remark}{Remark}{Remarks}
%\crefname{proposition}{Proposition}{Propositions}
%\crefname{section}{Section}{Sections}
%\crefname{equation}{equation}{equations}

\newcommand{\Ex}{\mathop{\mathbb{{}E}}\nolimits}
\newcommand{\omegatilde}{\widetilde{\omega}}
\let\epsilon=\varepsilon
\newcommand{\calE}{\mathcal{E}}
\newcommand{\calF}{\mathcal{F}}
\newcommand{\origin}{\mathbf0}
\newcommand{\rset}{\mathbb{R}}

\begin{document}

\title[Perfect Simulation of the Hard Disks Model by Partial Rejection Sampling]{Perfect 
simulation of the Hard Disks Model\\ by Partial Rejection Sampling}
\thanks{The work described here was supported by the EPSRC research grant
EP/N004221/1 ``Algorithms that Count''.}

%\author{Heng Guo and Mark Jerrum}

%\address[hguo@inf.ed.ac.uk]{Heng Guo, School of Informatics, University of Edinburgh, Informatics Forum, Edinburgh, EH8 9AB, United Kingdom.}

%\address[m.jerrum@qmul.ac.uk]{Mark Jerrum, School of Mathematical Sciences, Queen Mary, University of London, Mile End Road, London, E1 4NS, United Kingdom.}

\author{Heng Guo}
\address[Heng Guo]{School of Informatics, University of Edinburgh, Informatics Forum, Edinburgh, EH8 9AB, United Kingdom.}
\email{hguo@inf.ed.ac.uk}

\author{Mark Jerrum}
\address[Mark Jerrum]{School of Mathematical Sciences, Queen Mary, University of London, Mile End Road, London, E1 4NS, United Kingdom.}
\email{m.jerrum@qmul.ac.uk}

\begin{abstract}
We present a perfect simulation of the hard disks model via the partial rejection sampling method.  
Provided the density of disks is not too high, the method produces exact samples in $O(\log n)$ rounds, and total time $O(n)$,
where $n$ is the expected number of disks.  The method extends easily to the hard spheres 
model in $d>2$ dimensions.
In order to apply the partial rejection method to this continuous setting,
we provide an alternative perspective of its correctness and run-time analysis that is valid for general state spaces.
\end{abstract}

%\begin{classification}
\subjclass{Primary: 82B21; Secondary: 60G55, 68W20, 68W40, 68Q87.}
%\end{classification}

%\begin{keywords}
%Hard Disks model, Exact sampling, Rejection sampling.
%\end{keywords}

\maketitle

\section{Introduction}

The \emph{hard disks model} is one of the simplest gas models in statistical physics.
Its configurations are non-overlapping disks of uniform radius~$r$ in a bounded region of~$\rset^2$.
For convenience, in this paper, we take this region to be the unit square $[0,1]^2$.
This model was precisely the one studied by Metropolis et al.~\cite{MRRTT53}, in their pioneering work on the Markov chain Monte Carlo (MCMC) method.
They used Los Alamos' MANIAC computer to simulate a system with $224$ disks.

There are two variants of this model.
To obtain the \emph{canonical ensemble}, we fix the number (or equivalently, density) of disks and decree that 
all configurations are ``equally likely'', subject only to the disks not overlapping.
In the \emph{grand canonical ensemble}, we fix the ``average'' number of disks.
To be more specific, centers of the disks are distributed according to a Poisson point process of intensity $\lr=\lambda/(\pi r^2)$,
conditioned on the disks being non-overlapping.
The hard disks model, and its higher dimensional generalization (called the \emph{hard spheres model}) 
are also related to the optimal sphere packing density~\cite{Hal05,Via17,CKMRV17}.
See \cite{JJP19,Coh17} and references therein for more details.
See also \cite{Low00} for the physics perspective.

Our main aim in this work is to describe and analyse a very simple algorithm 
for exactly sampling from the grand canonical ensemble, based on the partial rejection sampling
paradigm introduced by Guo, Jerrum and Liu~\cite{GJL17}.

More precisely, the challenge is the following:  produce a realisation~$P\subset[0,1]^2$
of a Poisson point process of intensity $\lr$ in the unit square, 
conditioned on the event that no pair of points in~$P$ are closer than $2r$ in Euclidean distance.  
We refer to this target measure as the \emph{hard disks distribution}. 
It describes an arrangement of open disks of radius~$r$ with centres in $[0,1]^2$ that are not
allowed to overlap, but which otherwise do not interact.  It is a special case of
the Strauss process~\cite{Str75}.  Note that, although the disks do not overlap each other, 
they may extend beyond the boundary of the unit square.  
Also, the intensity of the underlying Poisson process 
is normalised so that the expected number of points of~$P$ lying in a disk of radius~$r$ is~$\lambda$. 
This normalisation gives us sensible asymptotics as the radius of the disks tends to zero
(equivalently, the number of disks tends to  infinity).

Classical rejection sampling applied to this problem yields the  following 
algorithm:  repeatedly sample a realisation~$P$ of the Poisson process 
of intensity~$\lambda$ in the unit square until $P$ satisfies the condition that 
no two points are closer than~$2r$, and return~$P$.  Unfortunately, for every $\lambda>0$, 
however small,
the expected number of unsuccessful trials using this approach increases exponentially 
in $r^{-1}$, as $r\to0$.  Partial rejection sampling~\cite{GJL17} requires only a subset 
of $P$ to be resampled at each iteration.  Algorithm~\ref{alg:PRS} below arises 
from a routine application of the paradigm to the problem at hand.

The original partial rejection method \cite{GJL17} and its analysis are tailored for the discrete case.
In this paper we provide an alternative view on the correctness of the method, which is also valid in the continuous setting.
In other words, as with classical rejection sampling, Algorithm~\ref{alg:PRS} 
terminates with probability~1, producing a realisation of the exact hard disks distribution.

\begin{theorem}\label{thm:correctness}
Algorithm \ref{alg:PRS} is correct:  conditional on halting,
Algorithm \ref{alg:PRS} produces a sample from the hard disks distribution with
intensity $\lr=\lambda/(\pi r^2)$.
\end{theorem}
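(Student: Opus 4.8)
The plan is to view Algorithm~\ref{alg:PRS} as a round-by-round process on the space $\Omega$ of finite subsets of $[0,1]^2$, started from the law $\mu$ of a Poisson process of intensity $\lr$, and to show that each round, conditioned on producing a feasible configuration at that round, outputs the hard disks law $\nu:=\mu(\,\cdot\mid G)$, where $G\subseteq\Omega$ is the event that no two points lie within distance $2r$. Write $\mu_R,G_R,\nu_R$ for the analogues of $\mu,G,\nu$ when $[0,1]^2$ is replaced by a Borel set $R\subseteq[0,1]^2$; the only property of $\mu$ I need is that its restrictions to disjoint sets are independent. From the definition of the resampling region $\Res(\sigma)$ built from a configuration $\sigma\notin G$, I would first record two elementary facts, both immediate from the way $\Res(\sigma)$ is grown outward from the overlapping points: (i) for every pair of points of $\sigma$ at distance $<2r$, both points lie in $\Res(\sigma)$; and (ii) every point of $\sigma$ lying outside $\Res(\sigma)$ is at distance $\ge 2r$ from the set $\Res(\sigma)$ — so that after the round a freshly sampled point, which lies in $\Res(\sigma)$, is never within $2r$ of a point that survived the round.

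For the reduction, let $\sigma_0\sim\mu$, $\sigma_1,\sigma_2,\dots$ be the successive configurations and $T:=\min\{t\ge1:\sigma_{t-1}\in G\}$ the halting round ($T=\infty$ if it never halts), so the output is $\sigma_{T-1}$. As the empty configuration already gives $\mu(G)\ge e^{-\lr}>0$, we have $\Pr[T<\infty]>0$ and conditioning on halting is legitimate; it then suffices to show, for every $t\ge1$ and every measurable $A\subseteq\Omega$,
\[
\Pr[\sigma_{t-1}\in A,\ T=t]\;=\;\Pr[T=t]\cdot\nu(A),
\]
since summing over $t$ and dividing by $\Pr[T<\infty]$ gives the theorem. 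For $t=1$ this is plain rejection sampling: $\{T=1\}=\{\sigma_0\in G\}$, so the left side is $\mu(A\cap G)=\mu(G)\,\nu(A)$.

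For $t\ge2$ I would analyse one round conditionally. Conditioning on $\sigma_{t-2}=s\notin G$, set $R:=\Res(s)$, $F:=[0,1]^2\setminus R$ and $\tau:=s\cap F$; then $\sigma_{t-1}\cap F=\tau$ and $\sigma_{t-1}\cap R\sim\mu_R$ independently. By~(i), $\tau$ has no close pair; by~(ii), every point of $\tau$ is at distance $\ge2r$ from $R$, so $\sigma_{t-1}$ admits no close pair straddling $F$ and $R$; hence $\{\sigma_{t-1}\in G\}=\{\sigma_{t-1}\cap R\in G_R\}$ and, conditional on $\sigma_{t-2}=s$ and on $\sigma_{t-1}\in G$, we have $\sigma_{t-1}=\tau\cup\rho$ with $\rho\sim\nu_R$. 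The key point is that the disintegration of $\nu$ over the restriction to $F$ has exactly the same form for any good $\tau$ at distance $\ge2r$ from $R$, namely $\nu(\,\cdot\mid\sigma\cap F=\tau)=\delta_\tau\otimes\nu_R$, because Poisson independence across $F$ and $R$ followed by conditioning on $G$ factorises once no straddling pair is possible. Writing $q(s):=\mu_{\Res(s)}\!\big(G_{\Res(s)}\big)$, this rewrites the left side above as
\[
\Ex\!\Big[\mathbf 1\{\sigma_0,\dots,\sigma_{t-2}\notin G\}\ q(\sigma_{t-2})\ \nu\big(A\ \big|\ \sigma\cap F(\sigma_{t-2})=\tau(\sigma_{t-2})\big)\Big],
\]
and reduces the claim to showing that, under the weighting $\mathbf 1\{\sigma_0,\dots,\sigma_{t-2}\notin G\}\,q(\sigma_{t-2})$, re-drawing the $\Res(\sigma_{t-2})$-part of $\sigma_{t-2}$ from $\nu_{\Res(\sigma_{t-2})}$ yields (after normalisation) the measure $\nu$.

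The last step is an induction on $t$, and I expect it to be the main obstacle. The mechanism is that $\Res(s)$ is determined by $s\cap\Res(s)$ together with the fact that the rest of $s$ is $\ge2r$ away from $\Res(s)$, so the weight and the ``frozen data'' $\big(\Res(s),\,s\cap F(s)\big)$ are insensitive to re-randomising the interior of $\Res(s)$; feeding in the inductive hypothesis — that, conditioned on having reached round $t-1$, the configuration $\sigma_{t-2}$ already carries, for each possible frozen region, the correct conditional law on its complement — should close the computation. The real work, I anticipate, is the continuous-space bookkeeping rather than this idea: propagating the conditioning ``has not halted before round $t$'' through the induction, choosing the right $\sigma$-algebra of frozen-region events to disintegrate over, checking that $s\mapsto\Res(s)$ and $q$ are measurable, and justifying the disintegration of $\nu$ over the random boundary region together with the Fubini exchange between $\mu_{\Res(s)}$ and the law of $s\cap F(s)$, uniformly in $s$. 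Termination with probability~$1$ — needed only so that the conditioning is on a probability-one event — I would simply import from the separate run-time analysis giving $\Ex[T]=O(\log n)$.
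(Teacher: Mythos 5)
Your proposal has the right high-level shape (reduce correctness to a round-by-round conditional-law identity, then induct), and in that broad sense it resembles the paper's approach, which establishes a loop invariant by induction on the round index. But there is a concrete geometric error that propagates through the argument, and the induction you identify as "the main obstacle" is left unresolved.

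The problem is your claim (ii). The resampling set $S=\Res(\sigma)=\badpoints(\sigma)+D_{2r}(\origin)$ has the property that every bad point is inside it (your (i) is right), and, crucially, that \emph{no other point of the current configuration} lies inside it — any non-bad point of $\sigma$ inside $S$ would be within $2r$ of a bad point and hence itself bad. But it is simply false that a surviving point outside $S$ is at distance $\ge 2r$ from the set $S$. A surviving point $x\notin S$ only satisfies $\|x-b\|\ge 2r$ for every bad point $b$; it can sit arbitrarily close to the boundary of $S$, and a freshly sampled point inside $S$ near that boundary can land within $2r$ of $x$. (Concretely: with two bad points and a surviving point on the boundary of one of the two $2r$-disks, a fresh point just inside that disk can form a bad pair with the surviving point.) Consequently, straddling bad pairs between $F$ and $R$ do occur after a round, so $\{\sigma_{t-1}\in G\}\ne\{\sigma_{t-1}\cap R\in G_R\}$; the disintegration $\nu(\cdot\mid\sigma\cap F=\tau)$ is not $\delta_\tau\otimes\nu_R$ but instead $\delta_\tau\otimes\mu_R(\cdot\mid G'_{R,\tau})$ where $G'_{R,\tau}$ also forbids close pairs with $\tau$; and your normaliser $q(s)=\mu_{\Res(s)}(G_{\Res(s)})$ omits the straddling constraint. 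Every place where you say "no straddling pair is possible" needs repair. (As a silver lining, the same corrected event $G'_{R,\tau}$ appears on both sides of the comparison, so the identification of the two conditional laws can still be made — but not for the reason you give.)

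Even with that fixed, the induction remains the load-bearing step and you have only sketched it. The paper sidesteps the boundary subtlety entirely by choosing a different quantity to condition on: it shows, by induction on $t$, that
\[
\law\bigl(P_t\bigm|\badpoints(P_0)=B_0\wedge\cdots\wedge\badpoints(P_t)=B_t\bigr)
=\law(\Pi\mid\badpoints(\Pi)=B_t)
\]
for every feasible history $B_0,\dots,B_t$ of bad-point sets. In the inductive step it first derives
$\law(P_{t+1}\mid\text{history up to }B_t)=\law(\Pi\mid\badpairs(\Pi)\cap\Sbar^{(2)}=\emptyset)$ — a distribution that explicitly \emph{permits} straddling bad pairs — and only afterwards conditions on $\badpoints(P_{t+1})=B_{t+1}$, which (for a feasible $B_{t+1}$) entails the earlier event and collapses the history. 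Theorem~\ref{thm:correctness} then follows by setting $B_T=\emptyset$. Your plan conditions instead on the previous \emph{configuration} $\sigma_{t-2}$ and a frozen pair $(F,\tau)$, which forces you to disintegrate $\nu$ over a region $F$ that is itself determined by the pre-resampling state; making that precise, and propagating "has not halted yet" through the induction, is exactly the bookkeeping you flag as outstanding. Conditioning on the sequence of bad-point sets, as the paper does, is the device that makes the bookkeeping tractable; I'd suggest reformulating your induction along those lines.
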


The proof of this result forms the content of Section~\ref{sec:correctness}.

\begin{figure}[h]
  \centering
  \includegraphics[width=200pt]{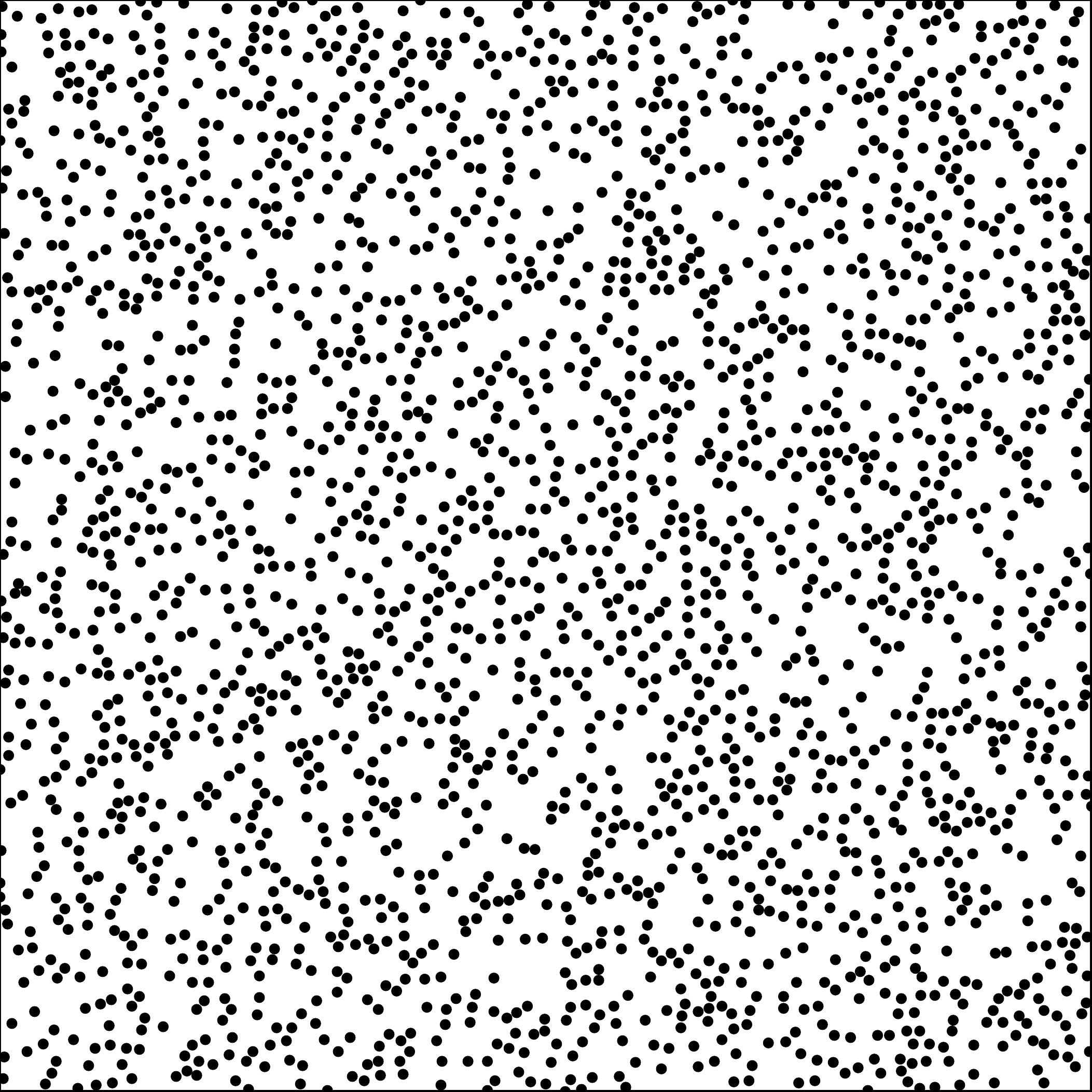}
  \caption{A realisation of the hard disks measure with $\lambda=0.5$ and $r=\frac{1}{200}$. The resulting density is $\alpha=0.189+$.}
  \label{fig:harddisks}
\end{figure}

In contrast to classical rejection sampling, the expected number 
of iterations (resampling steps) is now asymptotically $O(\log(r^{-1}))$ as $r\to0$,
provided $\lambda$ is not too large. 
Furthermore, with a suitable implementation, the total runtime is $O(r^{-2})$, i.e., linear in the number of disks.
 We prove that rapid termination occurs when $\lambda<0.21027$. 
This analysis is not tight, and experiments suggest that the actual
threshold for rapid termination is around~$\lambda\approx0.5$.
Figure~\ref{fig:harddisks} is a realisation of $\lambda =0.5$ with $r=\frac{1}{200}$.
The resulting density is $\alpha=0.189+$.

\begin{theorem}\label{thm:runtime}
Fix $\lambda<0.21027$.  Then the expected number of iterations of the while-loop
in Algorithm~\ref{alg:PRS} is $O(\log r^{-1})$.  Moreover, with a suitable implementation, 
the overall runtime of the algorithm is $O(r^{-2})$.
\end{theorem}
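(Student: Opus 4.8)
The plan is to establish the two claims — the $O(\log r^{-1})$ bound on the number of iterations, and the $O(r^{-2})$ bound on the overall runtime — more or less separately, with the iteration bound being the conceptual heart and the runtime bound a matter of careful bookkeeping. For the iteration count, the partial rejection paradigm of \cite{GJL17} comes with a general principle: the expected number of resampling rounds is controlled if, roughly speaking, each round shrinks the ``active'' region by a constant factor in expectation. Concretely, I would track the set $\Bad$ of points that have been resampled (equivalently, the region of $[0,1]^2$ whose Poisson points are still ``in play''), and argue that in each round this region contracts geometrically. The key quantitative input is a comparison between the rate at which bad events are created versus destroyed: when $\lambda$ is small enough, a resampled point is unlikely to land too close to another point, so the expected number of new bad pairs generated is a constant (less than~1) multiple of the number resolved. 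This is exactly where the threshold $\lambda<0.21027$ enters — one needs to bound the probability that a freshly placed Poisson point in a resampling disk conflicts with an existing point, and then sum a geometric-type series. Since the total expected number of points is $\Theta(r^{-2})=\Theta(n)$ and each round removes a constant fraction of the currently-bad volume, the number of rounds is $O(\log n)=O(\log r^{-1})$.

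\textbf{Setting up the contraction.} I would first recall (or reprove, in the continuous setting, using Theorem~\ref{thm:correctness}'s machinery) the ``resampling set'' associated with a configuration: a point $p\in P$ is bad if some other point lies within distance $2r$, and the region to be resampled is the union of radius-$2r$ (or perhaps $3r$, depending on the precise definition in Algorithm~\ref{alg:PRS}) disks around bad points, intersected with $[0,1]^2$. Let $R_t$ denote the area of this resampling region after round~$t$. The crucial lemma is that $\Ex[R_{t+1}\mid \mathcal F_t]\le c\,R_t$ for a constant $c<1$ depending on $\lambda$. To prove it, condition on the configuration entering round~$t$; inside each resampling disk we draw a fresh Poisson$(\lr)$ process; a point survives into the next round's bad set only if it is within $2r$ of some other (old or new) point. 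A union bound over potential conflict partners, using that the expected number of points in any disk of radius~$2r$ is $4\lambda$, yields that the expected area contributed to $R_{t+1}$ by each unit of $R_t$ is at most some explicit function $f(\lambda)$; one checks $f(\lambda)<1$ precisely when $\lambda<0.21027$. Iterating, $\Ex[R_t]\le c^t R_0\le c^t$, and the process halts once $R_t=0$; since areas are bounded below by $\Omega(r^2)$ when nonzero (a single bad pair already forces a region of area $\Omega(r^2)$), we get halting within $O(\log r^{-1})$ rounds in expectation, and the bound on the \emph{expected} number of iterations follows by summing the tail probabilities $\Pr[R_t>0]\le \Ex[R_t]/\Omega(r^2)\le c^t r^{-2}$.

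\textbf{The runtime bound.} For the $O(r^{-2})$ total-time claim I would describe a data-structure-level implementation: maintain the current point set in a uniform grid of cells of side $\Theta(r)$, so that each conflict check and each nearest-neighbour query touches only $O(1)$ cells. In round~$t$, the work is proportional to the number of points resampled plus the number of conflict checks, which is $O(1)$ times the number of Poisson points falling in the resampling region of area $R_t$; in expectation this is $O(R_t\cdot r^{-2})$. Summing over rounds, the total expected work is $O\!\bigl(r^{-2}\sum_t \Ex[R_t]\bigr)=O\!\bigl(r^{-2}\sum_t c^t\bigr)=O(r^{-2})$. One has to be a little careful that $R_0=O(1)$ (the whole square) so the first round already costs $\Theta(r^{-2})$, which is fine since that matches the target, and that generating a Poisson process restricted to a union of disks can be done in time linear in the number of points produced plus the number of grid cells overlapped — again $O(r^{-2})$ overall.

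\textbf{Main obstacle.} I expect the delicate point to be the contraction lemma, specifically making the constant $f(\lambda)$ sharp enough to reach $0.21027$ rather than some much smaller threshold. The naive union bound over conflict partners double-counts and ignores that a newly placed point which itself conflicts also \emph{shields} part of the region; getting the right constant requires carefully accounting for the geometry of overlapping resampling disks and for the fact that the relevant quantity is an \emph{area} of a union, not a sum of areas. A secondary subtlety is boundary effects — resampling disks clipped by $\partial[0,1]^2$ — but these only help (less area), so they can be handled by a throwaway remark. I would also need to confirm that the general correctness argument of Section~\ref{sec:correctness} legitimately licenses treating the rounds as a Markovian process on configurations with the stated resampling kernel, so that the conditional-expectation estimates are valid.
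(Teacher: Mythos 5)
Your overall shape matches the paper's: track a potential that contracts geometrically in expectation, conclude $O(\log r^{-1})$ rounds via a (super)martingale / geometric-decay argument, and implement with a $\Theta(r)\times\Theta(r)$ grid so the per-round work is proportional to the potential and the total is a geometric sum dominated by the first term. The one presentational difference is that you track the \emph{area} $R_t$ of the resampling region while the paper tracks $Z_t=|\badpairs(P_t)|$; since $R_t$ and $Z_t$ differ only by a factor $\Theta(r^2)$ (each bad pair contributes at most two radius-$2r$ disks to $S_t$, and conversely a nonempty $S_t$ has area $\Omega(r^2)$), these are interchangeable and buy the same thing.

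There are, however, two concrete gaps. First, when you estimate the chance a freshly placed point conflicts with ``some other (old or new) point'' by treating all points as Poisson with local mean $4\lambda$, you are silently upper-bounding the number of \emph{old} points — which after round $t$ follow the hard-disks law outside $S_t$, not the Poisson law — by a Poisson process of the same intensity. That step requires a stochastic-domination argument; the paper proves it as Lemma~\ref{lem:dominate} using Preston's infinite-state version of Holley's criterion together with Strassen's theorem. Without this, the contraction estimate is not justified. Second, you flag but do not resolve how to reach the threshold $0.21027$. The paper's route is specific: each resampling set $S_t$ is a union of at most $2k$ disks of radius $2r$, the expected number of new bad pairs with one endpoint in such a disk is at most $16\lambda^2$, giving $k' \le 32\lambda^2 k$ and the crude constant $1/(4\sqrt2)\approx 0.17677$; the improvement to $0.21027$ then comes from subtracting off the pairs double-counted because \emph{both} endpoints lie in the same disk, an explicit lens-area integral $\big(8-\tfrac{6\sqrt3}{\pi}\big)\lambda^2$. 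Your ``union of areas vs. sum of areas'' heuristic points in the right direction, but the actual mechanism is a pair-counting correction, not a union-of-disks geometry argument, and without something of that precision you cannot certify any particular numerical threshold.
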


The proof of this result forms the content of Section~\ref{sec:runtime}.

The method extends naturally to the \emph{hard spheres model} in $d>2$ dimensions. 
Here, the desired distribution is a Poisson point process in $[0,1]^d$ conditioned
on no pair of points being closer than $2r$.   
The natural normalisation for the intensity of the Poisson process
in $d$ dimensions is $\lrd=\lambda/(v_dr^d)$,
where $v_d$ is the volume of a ball of radius~$1$ in $\rset^d$.   
With this convention, we
prove that rapid termination occurs in $d$~dimensions provided $\lambda<2^{-(d+\frac12)}$.

The \emph{expected packing density} $\alpha(\lambda)$ or simply $\alpha$ for this model 
is the expected total volume of spheres.  (Note that, neglecting boundary effects, $\alpha$ is 
the proportion of the unit cube occupied by spheres.)  
The quantity $\alpha(\lambda)$ grows monotonically with~$\lambda$, 
but intuitively we expect its rate of growth to slow down dramatically as the spheres pack more tightly.
The connection between expected packing density~$\alpha$ 
and intensity~$\lambda$ has recently been thoroughly 
explored by Jenssen, Joos and Perkins~\cite{JJP19}.  Using their results, we show that 
partial rejection sampling can achieve expected packing density $\Omega(2^{-d})$ while retaining
rapid termination in $O(\log(r^{-1}))$ iterations.  Although sphere packings of density $\Omega(d2^{-d})$
have been proved to exist, there is no polynomial-time sampler that provably
achieves packing density beyond $O(2^{-d})$, as far as we are aware. 

Other approaches to exact sampling include Coupling From The Past (CFTP), which
was adapted to point processes by Kendall~\cite{Kendall98} and Kendall and M{\o}ller \cite{KM00}.
Recently, Moka, Juneja and Mandjes~\cite{MJM17} proposed an algorithm based on rejection and importance sampling. 
Although their algorithm, like ours, is based on rejection sampling,    
it does not share our asymptotic performance guarantee.   Indeed, 
its runtime appears to grow exponentially as the number of disks goes to infinity, 
with the density of disks held constant.

The most widely used approach to sampling configurations from the hard disks model 
is Markov chain Monte Carlo (MCMC){}.  
Here the desired distribution is approached in the limit as the runtime~$t$ of the sampler tends to infinity.  
In this sense, MCMC produces an approximate sampler, though the error (in total variation distance) decays 
exponentially with~$t$.  The problem lies in deciding how large $t$ should be in order to ensure that the samples 
obtained are close enough to the desired distribution.  There are two possibilities.  
The runtime~$t$ may be chosen heuristically, in which
case the quality of the output from the sampler is not guaranteed.  
Otherwise, an analytical upper bound on mixing time may be used to determine a suitable~$t$, 
but then the tendency is for this bound to be very conservative.    
The experimental advantage of partial rejection sampling is its simple termination rule,
combined with the property of generating perfect samples from the desired distribution.    

Approximate sampling via Markov chain simulation has been studied by Kannan, Mahoney 
and Montenegro~\cite{KMM03} and Hayes and Moore~\cite{HM14} in the context 
of the canonical ensemble, where the number of spheres in a configuration is fixed.
(So in the MCMC approach, it is the density $\alpha$ that is chosen in advance, while in 
the approach taken here we choose $\lambda$ in advance and then $\alpha$ follows.)   
Kannan et al.~\cite{KMM03} show that rapid mixing (convergence to near-stationarity in 
time polynomial in the number of disks) occurs for densities $\alpha\leq2^{-(d+1)}$, 
in dimension~$d$.  The best rigorously derived density bound guaranteeing rapid mixing 
in two dimensions is given by Hayes and Moore~\cite{HM14} and is $\alpha\approx 0.154$.
(Note that this improves on the $\alpha=\frac18$ obtained by Kannan et al.)
It is should be noted that these results are not directly comparable with our~$\lambda<0.21027$ 
owing to the difference in models.
To obtain canonical ensembles, we could use Algorithm~\ref{alg:PRS} and further condition 
on the number of desired disks.
However, the only rigorous guarantee for this approach, via \cite{JJP19}, is $\alpha(0.21027)> 0.0887$.

It is believed that the hard disks model in two dimensions undergoes a phase transition at a certain
critical density $\alpha_c$:  at lower densities configurations are disordered, while at higher densities, 
long range correlations can be observed.  Unfortunately, $\alpha_c$ is well beyond the densities that 
can be achieved by samplers (either based on MCMC or rejection sampling) with rigorous performance guarantees.
However, heuristic approaches using sophisticated MCMC samplers~\cite{EngelEtAl} suggest that the critical density is around
$\alpha_c\approx 0.7$.

Finally, determining the maximum achievable density $\alpha$ is the classical sphere packing problem.
Despite extensive study, the exact solution is known only for dimensions $d=1,2,3,8$ and 24.
See \cite{JJP19, Per16} for rigorous bounds on packing densities in general dimension~$d$.

A preliminary version of this paper was presented at ICALP 2018~\protect\cite{HardDisksICALP}.
Subsequently, the constant in Theorem~\ref{thm:runtime} has been improved by Jake Wellens \cite{Wel18} to $0.2344+$.

\section{The sampling algorithm}

The following notation will be used throughout.
If $P$ is a finite subset of $[0,1]^2$ then
\[\badpairs(P)=\big\{\{x,y\}:x,y\in P\wedge x\not=y\wedge\|x-y\|<2r\big\},\]
where $\|\cdot\|$ denotes Euclidean norm, and
\[\badpoints(P)=\bigcup\badpairs(P).\]
The open disk of radius $r$ with centre $x\in[0,1]^2$ is denoted by $D_{r}(x)$.
The finite set $\Pi\subset[0,1]^2$ always denotes a realisation of the Poisson point
process of intensity $\lr$ on $[0,1]^2$.
For a random variable~$X$ and event~$\calE$ we use $\law(X)$ to
denote the distribution (law) of~$X$, and $\law(X\mid \calE)$ the
distribution of $X$ conditioned on~$\calE$ occurring.
Thus, $\law(\Pi\mid\badpoints(\Pi)=\emptyset)$ is the hard disks distribution
that we are interested in.

Our goal is to analyse the correctness and runtime of a sampling algorithm 
for the hard disks model
(see Algorithm~\ref{alg:PRS} below),
specifically to determine the largest value of $\lambda$ for which it terminates
quickly, i.e., in $O(\log r^{-1})$ iterations.
The algorithm is an example application of ``Partial Rejection Sampling''~\cite{GJL17}, adapted
to the continuous state space setting.

\begin{algorithm}
\caption{Partial Rejection Sampling for the hard disks model}\label{alg:PRS}
\begin{algorithmic}
\STATE $\PRS(\lambda,r)$
\COMMENT {$r$ is the disk radius and $\lr=\lambda/(\pi r^2)$ the intensity}
\STATE {Let $P$ be a sample from the Poisson point process
        of intensity $\lr$ on the unit square}
\WHILE {$B\gets\badpoints(P)\neq\emptyset$}
\STATE {$S\gets B+D_{2r}(\origin)$}
\COMMENT {Resampling set is the Minkowski sum of $B$ with a disk of radius $2r$}
\STATE {Let $P^S$ be a sample from the Poisson point process
        of intensity $\lr$ on~$S$}
\STATE {$P\gets(P\setminus B)\cup P^S$}
\ENDWHILE
\RETURN $P$
\end{algorithmic}
\end{algorithm}

\section{Correctness (Proof of Theorem~\ref{thm:correctness})}\label{sec:correctness}

Let $B$ be any finite subset of $[0,1]^2$. We say that $B$ is
a \emph{feasible set} of bad points if $\badpoints(B)=B$;
this is equivalent to saying that there is
a finite subset $R\subset[0,1]^2$ with $B=\badpoints(R)$.
The key to establishing correctness of Algorithm~\ref{alg:PRS} is the following loop invariant:
\[\law(P\mid \badpoints(P)=B)=\law(\Pi\mid\badpoints(\Pi)=B),\]
for every feasible set~$B$,
where $P$ is any intermediate set of points during the execution of the algorithm,
and $\Pi$ is  a realisation of the Poisson point process.
Let us consider what the right hand side means operationally.
Let $S=B+D_{2r}(\origin)$.
(This is the resampling set used by the algorithm.)  Let $Q$ be a sample from the
distribution $\law(\Pi\mid\badpoints(\Pi)=B)$.  The only points in~$Q$ that
lie inside~$S$ are the points in~$B$.  (Any extra points would create more bad
pairs than there actually are.)  Thus $Q\cap S=B$.
Outside of $S$ there are no bad pairs;  thus $Q\cap\Sbar$ is a sample from the
hard disks distribution on~$\Sbar=[0,1]^2\setminus S$.
Note that, setting $B=\emptyset$, we see that $\law(\Pi\mid\badpoints(\Pi)=\emptyset)$
is just the hard disks distribution on $[0,1]^2$.

Let $T$ (a random variable) be the number of iterations of the while-loop.
On each iteration, the while loop terminates with probability bounded away from~0;
thus $T$ is finite with probability~1.  (Indeed, $T$ has finite expectation.)
Let $P_t$, for $1\leq t\leq T$,
be the point set~$P$ after $t$~iterations of the loop, and $P_0$
be the initial value of $P$ (which is just a realisation of the Poisson point process
on $[0,1]^2$).
We say that $B_0,B_1,\ldots,B_t\subset[0,1]^2$ is a \emph{feasible sequence} of (finite)
point sets if there exists a run of Algorithm~\ref{alg:PRS} with
$\badpoints(P_0)=B_0,\ldots,\badpoints(P_t)=B_t$.
Theorem~\ref{thm:correctness} will follow easily from the following lemma.

\begin{lemma}\label{lem:main}
Let $B_0,B_1,\ldots,B_t\subset[0,1]^2$ be a feasible sequence.  Then
\begin{align*}
&\law\big(P_t\bigm|\badpoints(P_0)=B_0\wedge\cdots\wedge\badpoints(P_t)=B_t\big)\\
&\qquad=\law(P_t\bigm|\badpoints(P_t)=B_t)\\
&\qquad=\law(\Pi\bigm|\badpoints(\Pi)=B_t).
\end{align*}
\end{lemma}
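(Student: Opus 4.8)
The plan is to prove the two stated equalities by induction on $t$, with the second equality (``Markov-type'' collapse onto the last conditioning event) being the genuinely new ingredient and the first equality (the loop invariant identifying the conditional law of $P_t$ with the conditioned Poisson process) following from it together with the operational description given just before the lemma. For the base case $t=0$, both equalities are immediate: $P_0$ \emph{is} a realisation of the Poisson point process $\Pi$ on $[0,1]^2$, so conditioning on $\badpoints(P_0)=B_0$ gives exactly $\law(\Pi\mid\badpoints(\Pi)=B_0)$, and there is no earlier event to condition on.

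For the inductive step, suppose the claim holds for $t-1$. The key structural observation is the spatial decomposition at step $t$: given $\badpoints(P_{t-1})=B_{t-1}$, write $S=B_{t-1}+D_{2r}(\origin)$ for the resampling region; then $P_{t-1}=B_{t-1}\sqcup(P_{t-1}\cap\Sbar)$ (the only points of $P_{t-1}$ inside $S$ are the bad points themselves, since any other point in $S$ would be within $2r$ of some point of $B_{t-1}$ and hence itself bad), and $P_t=P^S\sqcup(P_{t-1}\cap\Sbar)$ where $P^S$ is a fresh Poisson sample on $S$, independent of everything so far. Since the old points $P_{t-1}\cap\Sbar$ outside $S$ are, by the induction hypothesis applied to the operational reading of $\law(\Pi\mid\badpoints(\Pi)=B_{t-1})$, distributed as a hard disks configuration on $\Sbar$, and $P^S$ is an independent Poisson sample on $S$, the pair $(P^S,\,P_{t-1}\cap\Sbar)$ has a product structure. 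Now I condition on the further event $\badpoints(P_t)=B_t$. Here I would spell out that this event decomposes into a part depending only on $P^S$ and its interaction across the boundary $\partial S$ with the (fixed-law) exterior configuration, and a part that is already forced to hold outside; crucially, feasibility of the sequence guarantees $B_t\cap\Sbar=(P_{t-1}\cap\Sbar)$'s bad points is consistent, and within $S$ the event pins down which configurations of $P^S$ survive. Because the exterior configuration's law does not depend on the history beyond $B_{t-1}$ (induction hypothesis, second equality) and $P^S$ is drawn afresh, conditioning $P_t$ on $\badpoints(P_0)=B_0\wedge\cdots\wedge\badpoints(P_t)=B_t$ gives the same law as conditioning only on $\badpoints(P_{t-1})=B_{t-1}\wedge\badpoints(P_t)=B_t$, and then — since $P_t$ given $\badpoints(P_{t-1})=B_{t-1}$ already has the conditioned-Poisson law by the invariant — this collapses further to $\law(P_t\mid\badpoints(P_t)=B_t)$. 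Matching this against the operational description of $\law(\Pi\mid\badpoints(\Pi)=B_t)$ — Poisson outside, exactly $B_t$ inside the $2r$-neighbourhood, no bad pairs outside — closes the induction.

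The main obstacle, and where I expect to spend the most care, is making rigorous the claim that conditioning on the \emph{entire history} $B_0,\dots,B_t$ is no more informative about $P_t$ than conditioning on $(B_{t-1},B_t)$ alone. The intuition is a Markov property — $P_t$ depends on the past only through $P_{t-1}$, and $\law(P_{t-1}\mid B_0,\dots,B_{t-1})=\law(P_{t-1}\mid B_{t-1})$ by induction — but in a continuous (point-process) state space one must be careful: the conditioning events have probability zero, so the argument should be phrased via Poisson process disintegration / the restriction property (the restrictions of a Poisson process to disjoint regions are independent), handling the interior region $S$ and exterior region $\Sbar$ separately, rather than via elementary conditional probabilities. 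A clean way to organise this is to note that the joint law of $(P_0,\dots,P_t)$ factorises as the initial Poisson law times a product of resampling kernels, each kernel depending only on the current bad set; then the desired conditional independence is a formal consequence of this factorisation, and the zero-probability conditioning is legitimised by working with the densities of the Poisson process against a common reference measure on finite point configurations. I would present this factorisation explicitly as the backbone of the proof and reduce both equalities to it.
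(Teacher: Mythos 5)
Your proposal follows the same overall strategy as the paper: induction on $t$, the spatial decomposition into the resampling region $S=B_{t-1}+D_{2r}(\origin)$ and its complement $\Sbar$, the observation that inside $S$ one has a fresh Poisson sample and outside $S$ the previous (hard disks) configuration survives, and then a collapse of the conditioning down to the last bad set. So the architecture is right. However, the crucial step where the full conditioning collapses is not nailed down, and your description of it is both imprecise and in one place incorrect. You assert that feasibility forces ``$B_t\cap\Sbar = (P_{t-1}\cap\Sbar)$'s bad points,'' but this cannot be what is wanted: $P_{t-1}\cap\Sbar$ has \emph{no} bad pairs at all (its bad points were exactly $B_{t-1}\subseteq S$), yet $B_t\cap\Sbar$ may well be nonempty, since a fresh point inside $S$ can form a bad pair with an old point just outside $S$. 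You also claim that ``$P_t$ given $\badpoints(P_{t-1})=B_{t-1}$ already has the conditioned-Poisson law by the invariant,'' but the induction hypothesis is a statement about $P_{t-1}$, not $P_t$; what is true about $P_t$ given the history up to $B_{t-1}$ is something different and needs to be stated.

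The paper fills exactly this gap with a clean two-step argument. First it identifies the intermediate law explicitly: conditioned on the history $\badpoints(P_0)=B_0\wedge\cdots\wedge\badpoints(P_{t-1})=B_{t-1}$, the point set $P_t$ has the law $\law(\Pi\mid\badpairs(\Pi)\cap\Sbar^{(2)}=\emptyset)$, i.e.\ Poisson on the whole square conditioned only on there being no bad pairs with \emph{both} endpoints in $\Sbar$. (This follows because the exterior is a hard disks configuration on $\Sbar$ and the interior is freshly Poisson, and these are independent.) Second, it observes the entailment: since $B_t$ is feasible, the event $\badpoints(\Pi)=B_t$ already implies $\badpairs(\Pi)\cap\Sbar^{(2)}=\emptyset$, so conditioning additionally on $\badpoints(\Pi)=B_t$ erases all dependence on the history and yields $\law(\Pi\mid\badpoints(\Pi)=B_t)$. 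This entailment is the single observation your write-up is missing, and it is what makes ``collapses further to $\law(P_t\mid\badpoints(P_t)=B_t)$'' actually follow rather than being asserted. Your remarks about zero-probability conditioning and disintegration via a common reference measure are legitimate concerns --- the paper handles them only informally --- so pursuing the density/factorisation route you sketch would be a worthwhile way to tighten the proof, but it does not substitute for the entailment argument.
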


\begin{proof}
We prove the result by induction on~$t$.
The base case, $t=0$, holds by construction: $P_0$ is just a realisation of the Poisson
point process on $[0,1]^2$.
Our induction hypothesis is
\begin{equation}\label{eq:IH}
\law\big(P_t\bigm|\badpoints(P_0)=B_0\wedge\cdots\wedge\badpoints(P_t)=B_t\big)
=\law(\Pi\bigm|\badpoints(\Pi)=B_t),
\end{equation}
for every feasible sequence $B_0,\ldots,B_t$.
Extend the feasible sequence to $B_{t+1}$.
For the inductive step, we assume (\ref{eq:IH}) and aim to derive
\begin{multline}\label{eq:IS}
\law\big(P_{t+1}\bigm| \badpoints(P_0)=B_0\wedge\cdots\wedge\badpoints(P_{t+1})=B_{t+1}\big)\\
=\law(\Pi\mid\badpoints(\Pi)=B_{t+1}).
\end{multline}
The resampling set on iteration $t+1$ is $S=B_t+D_{2r}(\origin)$.
As a first step we argue below that
\begin{multline}\label{eq:step1}
\law\big(P_{t+1}\bigm| \badpoints(P_0)=B_0\wedge\cdots\wedge\badpoints(P_t)=B_t\big)\\
=\law(\Pi\mid\badpairs(\Pi)\cap\Sbar^{(2)}=\emptyset),
\end{multline}
where $\Sbar^{(2)}$ denotes the set of unordered pairs of elements from~$\Sbar$.
We have noted that (\ref{eq:IH}) implies that, outside of the resampling set~$S$,
the point set~$P_t$ is a realisation of the hard disks distribution.
Also, the algorithm does not resample points outside of~$S$.
Thus $P_{t+1}\cap\Sbar=P_t\cap\Sbar$ is Poisson distributed, conditioned on there
being no bad pairs.  Inside $S$, resampling has left behind
a fresh Poisson point process $P_{t+1}\cap S$.
These considerations give (\ref{eq:step1}).

Next, we condition on $B_{t+1}$:
\begin{multline*}
\law\big(P_{t+1}\bigm|\badpoints(P_0)=B_0\wedge\cdots
\wedge\badpoints(P_t)=B_t\wedge\badpoints(P_{t+1})=B_{t+1}\big)\\
=\law\big(\Pi\bigm|\badpairs(\Pi)\cap\Sbar^{(2)}=\emptyset\wedge\badpoints(\Pi)=B_{t+1}\big).
\end{multline*}
Since $B_{t+1}$ contains no bad pairs with both endpoints in $\Sbar$, 
the event $\badpoints(\Pi)=B_{t+1}$ entails the event
$\badpairs(\Pi)\cap\Sbar^{(2)}=\emptyset$.  Thus, we have
\begin{multline*}
\law\big(P_{t+1}\bigm|\badpoints(P_0)=B_0\wedge\cdots
\wedge\badpoints(P_t)=B_t\wedge\badpoints(P_{t+1})=B_{t+1}\big)\\
  =\law(\Pi\mid\badpoints(P_{t+1})=B_{t+1}).
\end{multline*}
The right hand side of this equation does not involve $B_0,\ldots,B_t$, and so
\[\law(P_{t+1}\mid\badpoints(P_{t+1})=B_{t+1})=\law(\Pi\mid\badpoints(\Pi)=B_{t+1}).\]
This completes the induction step~(\ref{eq:IS}) and the proof.
\end{proof}

We can now complete the proof of Theorem~\ref{thm:correctness}.
As we observed earlier, $T$, the number of iterations of the while-loop,
is finite with probability~1.  
By Lemma~\ref{lem:main}, noting that $B_T=\emptyset$,
\[
\law(P_T)=\law(\Pi\mid\badpoints(\Pi)=\emptyset).
\]
In other words, at termination, Algorithm~\ref{alg:PRS} produces a realisation
of the hard disks distribution on $[0,1]^2$.
%\end{proof}

\section{Run-time analysis (Proof of Theorem~\ref{thm:runtime})}\label{sec:runtime}

We consider how the number of ``bad events'',
i.e., the cardinality of the set $\badpairs(P_t)$,
evolves with time.
As usual $\Pi$ denotes a realisation of the Poisson point process
of intensity~$\lr$.  Also denote by $\Delta$ a realisation of the hard disks
process of the same intensity.  We need the following stochastic domination result.

\begin{lemma}\label{lem:dominate}
The hard disks distribution is stochastically dominated by the Poisson point
process with the same intensity.  That is, we can construct a joint sample
space for $\Pi$ and $\Delta$ such that $\Delta\subseteq\Pi$.
\end{lemma}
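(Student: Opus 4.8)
The plan is to construct the coupling by a monotone thinning argument. First I would recall that the hard disks distribution $\law(\Delta)$ can itself be written as $\law(\Pi\mid\badpoints(\Pi)=\emptyset)$, i.e. as $\Pi$ conditioned on being feasible. So a naive approach — sample $\Pi$, reject if infeasible — produces the right marginal for $\Delta$ but does not give $\Delta\subseteq\Pi$ in a single joint draw, since under rejection the two variables are sampled independently rather than nested. The right tool is instead a spatial birth process / Poisson thinning representation of the hard disks model (a Penrose--Mathew-style ``dependent thinning''): order the points of $\Pi$ by an auxiliary i.i.d.\ uniform mark, process them one at a time, and retain a point $x$ into $\Delta$ if and only if $x$ does not lie within distance $2r$ of any previously \emph{retained} point. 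Because retention is a sub-operation of ``being in $\Pi$'', the resulting $\Delta$ is automatically a subset of $\Pi$.

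The key steps, in order: (i) augment $\Pi$ with i.i.d.\ $\mathrm{Unif}[0,1]$ marks, giving a marked Poisson process on $[0,1]^2\times[0,1]$; (ii) define $\Delta$ by the greedy/sequential rule above, using the marks to determine processing order (this is well-defined a.s.\ because $\Pi$ is a.s.\ finite, so there are only finitely many points to process); (iii) observe $\Delta\subseteq\Pi$ by construction, and $\badpoints(\Delta)=\emptyset$ since no retained point conflicts with an earlier retained point and the relation $\|x-y\|<2r$ is symmetric; (iv) verify that the law of $\Delta$ is exactly the hard disks distribution. For step (iv) I would either (a) invoke the known characterisation of the hard-core Gibbs process as the output of this sequential thinning — this is classical, e.g.\ via the spatial birth-and-death chain whose stationary measure is the Strauss/hard-core process at the given activity — or (b) give a direct Radon--Nikodym computation: condition on $\Pi$ having $k$ points at fixed locations; the marks induce a uniformly random permutation, and the probability that the greedy rule on that permutation retains exactly a given feasible subset, summed appropriately, reweights the $k$-point configurations by the indicator of feasibility, which is precisely the density of the hard disks law with respect to $\law(\Pi)$ restricted to $k$ points. (One has to be a little careful that every feasible configuration $\Delta$ is reachable and that the reweighting is uniform over feasible configurations of each size; this follows because if $\Delta$ is feasible then processing the points of $\Delta$ first, in any order, retains all of them, and any subsequent point of $\Pi\setminus\Delta$ is irrelevant to whether $\Delta$ itself is the retained set only if it would have been rejected — so some care in setting up the event is needed.)

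The main obstacle I anticipate is precisely this last verification — showing the greedy-thinned process has \emph{exactly} the hard disks law, not merely a feasible law stochastically between the two. The cleanest route is probably to avoid the combinatorial summation entirely and instead argue via a spatial birth-and-death process: run a continuous-time Markov chain on finite subsets of $[0,1]^2$ with birth rate $\lr$ per unit area (births placed uniformly) and death rate $1$ per point, but with births that would create a bad pair suppressed; its unique stationary distribution is the hard disks law, and it can be coupled on the same probability space with the corresponding \emph{unconstrained} birth-and-death chain (stationary law $\law(\Pi)$) so that the constrained configuration is always a subset of the unconstrained one (suppressed births are simply never added). Taking the two chains in their common stationary regime yields the desired monotone coupling of $\Delta$ and $\Pi$. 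I would present whichever of these two arguments (sequential thinning with Radon--Nikodym check, or coupled birth-and-death chains) is shorter given what the paper is willing to cite; both deliver $\Delta\subseteq\Pi$ with the correct marginals, which is all the lemma asserts.
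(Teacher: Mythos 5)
Your first approach --- sequential greedy thinning of $\Pi$ using i.i.d.\ marks --- does not produce the hard disks distribution, and this is a genuine error rather than a technicality that more care would fix. The process you describe is the Mat\'ern type III (random sequential adsorption) process, which is a \emph{different} point process from the hard-core Gibbs measure $\law(\Pi\mid\badpoints(\Pi)=\emptyset)$. A minimal counterexample: take $r$ so large that any two points in $[0,1]^2$ conflict. The hard disks law then has $\Pr[\Delta=\emptyset]=1/(1+\lr)$, by direct normalisation of the density $\lr^{|P|}\mathbf{1}[\text{feasible}]$. But your thinned process has $\Pr[\Delta=\emptyset]=\Pr[\Pi=\emptyset]=e^{-\lr}$, since whenever $\Pi\neq\emptyset$ the greedy rule retains exactly one point. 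These disagree for every $\lr>0$. So step (iv)(a) invokes a ``classical characterisation'' that does not exist, and step (iv)(b)'s Radon--Nikodym computation would reveal that the reweighting is not the indicator of feasibility. You correctly sense that something is delicate here, but the problem is fatal, not merely fiddly.

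Your fallback via coupled spatial birth-and-death chains is, by contrast, a sound strategy and genuinely different from what the paper does. The paper avoids dynamics entirely: it verifies the ratio condition $f_1(P+\xi)/f_1(P)\geq f_2(Q+\xi)/f_2(Q)$ for $Q\subseteq P$, where $f_1\propto\lr^{|P|}$ and $f_2\propto\lr^{|P|}\mathbf{1}[\badpairs(P)=\emptyset]$, invokes Preston's continuous-state-space version of Holley's inequality (Theorem 9.1 of \cite{Preston}) to get stochastic domination, and then applies Strassen's theorem to upgrade domination to a monotone coupling. Your birth-and-death route would instead cite Preston's work on spatial birth-and-death processes (the stationary law of the constrained chain is the Gibbs hard-core measure), run the coupled pair from a common empty initial state, observe that suppressed births preserve $\Delta_s\subseteq\Pi_s$ for all $s$, and take $s\to\infty$ along the ergodic limit to obtain the coupling of the two stationary laws. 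That is correct, more constructive, but also longer: you would owe the reader a justification that both marginals converge and that the subset relation survives the limit, details the Holley/Strassen route sidesteps. Note that you cannot simply ``start both chains in stationarity with $\Delta_0\subseteq\Pi_0$,'' since producing such a joint initial state is exactly the assertion of the lemma; the limit argument from $\emptyset\subseteq\emptyset$ is the honest version. If you pursue this, drop the thinning alternative entirely.
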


Holley's criterion is a useful test for stochastic domination, but it
is not of direct use to us in the proof of Lemma~\ref{lem:dominate}, 
because it applies only to finite state spaces.
Fortunately, Preston~\cite[Theorem 9.1]{Preston}, has derived
a version of Holley's criterion that fits our situation.  We will mostly follow
Preston's notation, except that, to save confusion, we will use
$P$ and $Q$, rather than $x$ and~$y$, to denote finite sets of points.
In order to state his result, we need some notation.  In our application,
$\omegatilde_n$ is the distribution
on $([0,1]^2)^{(n)}$ obtained by sampling $n$~points independently
and uniformly at random from $[0,1]^2$, and regarding the points
as indistinguishable;
furthermore, $\omegatilde=\sum_{n=0}^{\infty}\omegatilde_n/n!$.  (For consistency with Preston,
we have left $\omegatilde$ unnormalised.  If we had made it into a probability distribution by
division by~$e$, then $\omegatilde$ could be thought of as follows:  
sample an integer $k$ from the Poisson distribution with mean~1, and then pick $k$ (unlabelled) points 
uniformly and independently.) 
Denote by
$\Omega$ the set of all finite subsets of $[0,1]^2$, and 
by $\calF$ the set of non-negative measurable functions 
$\Omega\to\mathbb{R}$ satisfying
\begin{equation}\label{eq:normalise}
\int f\,d\omegatilde=1,
\end{equation}
and 
\begin{equation}\label{eq:ideal}
\text{$f(P)=0$ and $P\subseteq Q$ implies $f(Q)=0$.}
\end{equation}
(See Preston~\cite[Section 9]{Preston} for detailed formal definitions of the concepts here.)

\begin{lemma}[Theorem 9.1 of \cite{Preston}]\label{lem:Preston}
Let $f_1,f_2\in\calF$ and suppose that 
\begin{equation}\label{eq:premise}
\frac{f_1(P+\xi)}{f_1(P)}\geq \frac{f_2(Q+\xi)}{f_2(Q)}, 
   \quad\text{for all $Q\subseteq P\in\Omega$ and $\xi\in[0,1]^2\setminus P$}
\end{equation}
(where, by convention, $0/0=0$).  Then, for all bounded, measurable, non-decreasing functions 
$g:\Omega\to\mathbb{R}$,
\[
\int gf_1\,d\omegatilde\geq\int gf_2\,d\omegatilde,
\]
i.e., if $\mu_i$ is the probability measure having density $f_i$ with respect to $\omegatilde$,
then $\mu_1$ stochastically dominates $\mu_2$.
\end{lemma}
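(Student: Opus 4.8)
The plan is to realise each $\mu_i$ as the unique equilibrium law of a \emph{spatial birth-and-death process} on $\Omega$, and then to run the two processes on one probability space so that one configuration always contains the other. This is essentially Preston's own route — it is why the monograph \cite{Preston} develops spatial birth-and-death processes before arriving at this criterion — and the coupling turns out to be monotone precisely because of hypothesis~(\ref{eq:premise}).

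\emph{Step 1: the processes.} For $i=1,2$ let $(X^{(i)}_t)_{t\ge0}$ be the pure-jump Markov process on $\Omega$ that removes each point of the current configuration $P$ independently at rate $1$, and inserts a new point in an infinitesimal neighbourhood $d\xi$ of any $\xi\in[0,1]^2\setminus P$ at rate $\beta_i(P,\xi)\,d\xi$, where $\beta_i(P,\xi)=f_i(P+\xi)/f_i(P)$ (with $0/0=0$). By (\ref{eq:ideal}) the set $\{P:f_i(P)=0\}$ is an up-set, so $\{P:f_i(P)>0\}$ is a down-set; combined with (\ref{eq:normalise}) this forces $f_i(\emptyset)>0$, and starting the process from $\emptyset$ keeps it inside $\{f_i>0\}$ (deaths stay in a down-set, and births fire only where the ratio, hence $f_i(P+\xi)$, is positive), so $\beta_i$ is well defined along every trajectory. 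A one-line computation gives detailed balance, $f_i(P)\,\beta_i(P,\xi)=f_i(P+\xi)\cdot1$, so the probability measure $\mu_i$ with density $f_i$ with respect to $\omegatilde$ is reversible for $(X^{(i)}_t)$. Under the local-stability bound $\beta_i\le\Lambda$ that holds in our application (with $\Lambda=\lr$, since the relevant densities are the Poisson and hard-disks densities with respect to $\omegatilde$), the total birth rate out of any state is at most $\Lambda$ while the total death rate from a size-$k$ configuration is $k$, so $|X^{(i)}_t|$ is dominated by an $M/M/\infty$ queue; the process is therefore non-explosive and irreducible on the support of $\mu_i$, and, carrying a reversible probability law, it is ergodic, so $\law(X^{(i)}_t)\to\mu_i$ in total variation from the start $X^{(i)}_0=\emptyset$.

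\emph{Step 2: the monotone coupling.} Drive both processes from a single source of randomness: a common rate-$1$ death clock for each point (shared between the two processes whenever the point is present in both), and a common space-time Poisson process of \emph{proposed} births of intensity $\Lambda$ on $[0,1]^2\times[0,\infty)$, each proposal $(\xi,s)$ carrying an independent uniform mark $U\in[0,1]$; such a proposal is accepted into $X^{(i)}$ iff $\xi\notin X^{(i)}_{s^-}$ and $U\le\beta_i(X^{(i)}_{s^-},\xi)/\Lambda$. Starting from $X^{(1)}_0=X^{(2)}_0=\emptyset$, an induction over the (locally finitely many) events shows that $X^{(2)}_t\subseteq X^{(1)}_t$ is preserved: a death either removes a shared point from both or removes from $X^{(1)}$ a point absent from $X^{(2)}$; and at a proposed birth at $\xi$, writing $Q=X^{(2)}_{s^-}\subseteq P=X^{(1)}_{s^-}$, hypothesis~(\ref{eq:premise}) gives $\beta_1(P,\xi)\ge\beta_2(Q,\xi)$, so acceptance into the smaller configuration forces acceptance into the larger one, and in every case the inclusion survives the event.

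\emph{Step 3, and the main obstacle.} For any bounded, measurable, non-decreasing $g:\Omega\to\rset$, Step~2 gives $\Ex\,g(X^{(1)}_t)\ge\Ex\,g(X^{(2)}_t)$ for every $t$; letting $t\to\infty$ and using the ergodicity from Step~1 yields $\int g\,d\mu_1\ge\int g\,d\mu_2$, i.e.\ $\int gf_1\,d\omegatilde\ge\int gf_2\,d\omegatilde$, which is the assertion. The genuinely delicate point is Step~1 for \emph{general} $f_i\in\calF$ — well-posedness, non-explosion and ergodicity when the birth rates are unbounded — which is exactly where the integrability hypotheses behind \cite[Section~9]{Preston} enter; the clean way is to prove everything first under a uniform bound $\beta_i\le\Lambda$ (as above) and then remove it by a monotone truncation of the densities, using that $\mu_1$ stochastically dominating $\mu_2$ is characterised by the closed family of inequalities $\int g\,d\mu_1\ge\int g\,d\mu_2$ over bounded continuous monotone $g$, so domination passes to the limit. (A discretisation approach — partition $[0,1]^2$ into tiny cells, collapse to a finite lattice model, apply the classical Holley inequality, then take a weak limit — is also conceivable, but pushing (\ref{eq:premise}) through the cell-averaging is awkward, since a ratio of averages need not dominate the corresponding ratio for the other density; the birth-and-death route sidesteps this.) For the present paper no such subtlety arises, as the densities used are locally stable with $\Lambda=\lr$.
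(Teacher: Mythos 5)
The paper does not prove this lemma; it is quoted verbatim from Preston (Theorem 9.1 of \cite{Preston}) and used as a black box in the proof of \cref{lem:dominate}, so there is no in-paper argument to compare against. Your proposal reconstructs the cited result, and the route you take --- realising each $\mu_i$ as the reversible law of a spatial birth-and-death process and building a monotone coupling whose preservation of inclusion is forced by \eqref{eq:premise} --- is indeed the mechanism behind Preston's theorem. The detailed-balance check $f_i(P)\beta_i(P,\xi)=f_i(P+\xi)$ is correct; the down-set observation coming from \eqref{eq:ideal} gives $f_i(\emptyset)>0$ and confines trajectories started at $\emptyset$ to $\{f_i>0\}$; and the thinning coupling preserves $X^{(2)}_t\subseteq X^{(1)}_t$ precisely because \eqref{eq:premise} gives $\beta_1(P,\xi)\ge\beta_2(Q,\xi)$ whenever $Q\subseteq P$. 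For the densities the paper actually uses (Poisson and hard disks with respect to $\omegatilde$) the birth rates are bounded by $\lr$, so the $M/M/\infty$ domination and ergodicity arguments are clean and the whole proof goes through for the application.

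The one soft spot is the closing remark about removing the local-stability bound by ``monotone truncation of the densities.'' The obvious truncation $f_i^{(M)}(P)=c_M f_i(P)\mathbf{1}_{|P|\le M}$ does \emph{not} preserve \eqref{eq:premise}: for $Q\subsetneq P$ with $|P|=M$ the left-hand ratio vanishes (since $|P+\xi|=M+1$ kills the numerator) while the right-hand ratio equals $f_2(Q+\xi)/f_2(Q)$ and can remain positive. So either the truncation must be chosen more delicately, or one should simply carry Preston's own integrability/stability hypotheses, which the paper elides by deferring to ``Section 9'' of the reference for the precise formal setting. Since the paper needs only the locally stable case, this gap is cosmetic for the application, but worth flagging if you want the lemma at the stated level of generality.
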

\begin{proof}[Proof of Lemma \ref{lem:dominate}]
We set
\begin{align*}
f_1(P)&=C_1\lr^{|P|}\\
\noalign{\noindent and}
f_2(P)&=\begin{cases}
C_2\lr^{|P|},&\text{if $\badpairs(P)=\emptyset$;}\\
0,&\text{otherwise.}
\end{cases}
\end{align*}
The normalising constants $C_1$ and $C_2$ are
chosen so that both $f=f_1$ and $f=f_2$
satisfy \eqref{eq:normalise}.
(There is an explicit expression for~$C_1$, namely $C_1=\exp(-\lr)$,
but not for~$C_2$.)
Notice that both $f_1$ and $f_2$ also satisfy \eqref{eq:ideal}.
Notice also that the probability measures $\mu_1$ and~$\mu_2$
of the Poisson point process
and the hard disks process have densities $f_1$ and $f_2$ with respect to~$\omegatilde$.
The premise \eqref{eq:premise} of Lemma~\ref{lem:Preston}
holds, since the left-hand side is always $\lr$ and the right-hand side
is either $\lr$ or~0.
The conclusion is that $\mu_1$ dominates $\mu_2$.
Strassen's Theorem~\cite{Lindvall,Strassen},
allows us to conclude the existence of a coupling of $\Pi$ and $\Delta$
as advertised in the statement of the lemma (except, possibly, on a set of measure zero).
\end{proof}

\begin{lemma}\label{lem:runtime}
There is a bound $\lc>0$ such that the expected number of iterations of the while-loop
in Algorithm~\ref{alg:PRS} is $O(\log r^{-1})$ when $\lambda<\lc$.
\end{lemma}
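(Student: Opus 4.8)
The plan is to track the evolution of the number of bad pairs, $N_t := |\badpairs(P_t)|$, and show that this quantity contracts in expectation by a constant factor each round, as long as $\lambda$ is below some threshold $\lc$. Once we have a bound of the form $\Ex[N_{t+1} \mid N_t] \le \rho N_t$ for some $\rho < 1$ (uniformly, for all reachable configurations), standard arguments give that $T = O(\log N_0)$ with high probability and in expectation; and since $N_0$ is the number of close pairs in a Poisson process of intensity $\lr = \lambda/(\pi r^2)$ on the unit square, $\Ex[N_0] = O(r^{-2})$, so $\log N_0 = O(\log r^{-1})$. (A small amount of care is needed because once $N_t$ becomes small the multiplicative drift only gives expected constant-time absorption rather than further logarithmic shrinkage; this is handled by the usual two-phase argument, or by noting that $N_t$ is integer-valued so hitting $0$ from $O(1)$ takes $O(1)$ expected rounds.)

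The heart of the matter is the one-step drift bound. Fix a round with current bad set $B = B_t = \badpoints(P_t)$ and resampling region $S = B + D_{2r}(\origin)$. The new configuration is $P_{t+1} = (P_t \setminus B) \cup P^S$ where $P^S$ is a fresh Poisson sample of intensity $\lr$ on $S$. A new bad pair in $P_{t+1}$ either (i) has both endpoints in $P^S$, or (ii) has one endpoint in $P^S$ and one in $P_t \setminus B$ (a ``boundary'' pair); there are no bad pairs with both endpoints outside $S$, by the correctness analysis. For type (i), the expected number of such pairs is $\tfrac12 \lr^2 \int_S \int_S \mathds{1}[\|x-y\| < 2r]\,dx\,dy \le \tfrac12 \lr^2 \cdot |S| \cdot \pi(2r)^2 = 2\lambda |S| / (\pi r^2)$, and $|S| \le |B|\cdot\pi(2r)^2 + (\text{overlap correction}) \le 4\pi r^2 |B|$, giving an expected count of at most $8\lambda^2 |B|$ — but one must be more careful, bounding $|S|$ in terms of the number of bad \emph{points} $|B|$ and relating $|B|$ to $N_t = |\badpairs(P_t)|$ (trivially $|B| \le 2 N_t$, but one wants a sharper handle, e.g. charging the area of $S$ against bad pairs rather than bad points). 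For type (ii), a point newly placed in $S$ is bad with a point of $P_t \setminus B$ only if that old point lies within $2r$ of some point of $P^S$; the key observation is that $P_t \cap \Sbar$ is hard-disks distributed (by Lemma~\ref{lem:main}), and by Lemma~\ref{lem:dominate} it is stochastically dominated by a Poisson process of intensity $\lr$, so the expected number of type-(ii) pairs is bounded by $\lr^2 \int_S \int_{\Sbar} \mathds{1}[\|x-y\|<2r]\,dx\,dy$, which is controlled by the area of the annular shell of width $2r$ around $\partial S$, itself proportional to $r \cdot (\text{perimeter of } S)$, hence to $|B|$ up to constants.

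Assembling these two contributions yields $\Ex[N_{t+1} \mid \mathcal F_t] \le c_1(\lambda)\, N_t$ where $c_1(\lambda) \to 0$ as $\lambda \to 0$, and the contraction condition $c_1(\lambda) < 1$ defines the threshold $\lc$. The explicit constant $0.21027$ quoted in Theorem~\ref{thm:runtime} presumably comes from optimising these area estimates carefully (e.g.\ using the exact area of the intersection of two radius-$2r$ disks, and a packing bound to control how much the disks $D_{2r}(b)$, $b \in B$, can overlap, since the $b$'s themselves come in close pairs). I expect the main obstacle to be the bookkeeping that converts geometric integrals over $S$ and its boundary shell into clean multiples of $N_t$: one has to be careful that overlaps among the resampling disks (which genuinely occur, since bad points come in pairs at distance $<2r$) are exploited to keep $|S|$ small rather than naively over-counted, and that the domination lemma is invoked correctly to tame the contribution of the old, surviving points near $\partial S$. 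A secondary, more routine obstacle is making the ``small $N_t$'' endgame rigorous — ensuring that the drift argument, which is cleanest as a multiplicative supermartingale statement, actually delivers $\Ex[T] = O(\log N_0)$ and not merely $\Pr[T > C\log N_0 + k] \le 2^{-k}$; for this I would either cite a standard multiplicative-drift theorem or give the elementary two-line argument separately for the regime $N_t \ge 1$ small.

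Once Lemma~\ref{lem:runtime} is in hand, Theorem~\ref{thm:runtime} follows by combining it with an implementation that processes each round in time proportional to $|S|$ plus the number of points resampled, and observing (via the same geometric/domination estimates, now summed over all rounds and using that $\sum_t N_t = O(N_0)$ in expectation by the geometric decay) that the total work is $O(r^{-2})$.
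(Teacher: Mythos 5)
Your proposal takes essentially the same route as the paper: track $Z_t=|\badpairs(P_t)|$, establish a one-step multiplicative contraction $\Ex(Z_{t+1}\mid\text{history})\leq\alpha^{-1}Z_t$ by invoking the stochastic domination of the hard-disks law by the Poisson law of the same intensity outside $S$ (so the whole of $P_{t+1}$ may be dominated by a single Poisson realisation), and bound the area of $S$ by a constant multiple of the number of bad pairs. One simplification worth noting: the ``endgame'' you worry about in the small-$Z_t$ regime is not actually an issue, since $\Pr(T>t)=\Pr(Z_t\geq1)\leq\Ex Z_t\leq\alpha^{-t}\Ex Z_0$ by Markov's inequality already gives a geometric tail on $T$ once $t$ exceeds $O(\log r^{-1})$, and summing this tail yields $\Ex T=O(\log r^{-1})$ directly with no two-phase argument or separate multiplicative-drift theorem needed; also, the split into ``both endpoints fresh'' and ``one endpoint surviving'' pairs, while correct, can be folded into the single double integral $\int_{S}\lr\int_{[0,1]^2}\lr\,\mathbf{1}_{\|x-y\|\leq 2r}\,dy\,dx$ as in the paper, avoiding the perimeter/annular-shell estimate entirely.
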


\begin{proof}
First observe that $\badpairs(P)$ determines $\badpoints(P)$ and vice versa.
So conditioning on the set $\badpairs(P)$ is equivalent to conditioning
on $\badpoints(P)$.

Introduce random variables $Z_t=|\badpairs(P_t)|$, for $0\leq t\leq T$.
Our strategy is to show that
\begin{equation}\label{eq:martingale}
\Ex (Z_{t+1}\mid Z_0,\ldots,Z_t)\leq \alpha^{-1} Z_t,
\end{equation}
for some $\alpha>1$.  Then $Z_0,\alpha Z_1,\alpha^2 Z_2,\alpha^3 Z_3,\ldots$ is
a supermartingale (with the convention that $Z_t=0$ for all $t>T$).
Therefore,
$\Ex Z_t\leq \alpha^{-t}\Ex Z_0\leq \frac12\lr^2 \alpha^{-t}$.
Here, we have used the fact that $|P_0|$ is a Poisson random variable with
expectation $\lr$, and 
\[Z_0=|\badpairs(P_0)|\leq\tfrac12|P_0|\,(|P_0|-1),\]
and hence 
\[\Ex Z_0\leq\Ex\big[|P_0|\,(|P_0|-1)\big]=\tfrac12\lambda_r^2.\]
Setting $t=O(\log r^{-1}+\log \epsilon^{-1})$, we obtain $\Ex Z_t<1/\epsilon$,
and hence $\Pr(T>t)\leq \epsilon$.  It follows that the
expected number of iterations of the while-loop of Algorithm~\ref{alg:PRS}
is $O(\log r^{-1})$.  Note that the probability of non-termination decreases
exponentially with~$t$, so the probability of large deviations above the expected
value of~$T$ is low.

Crude estimates give $\lc=1/(4\sqrt 2)$.  The calculation goes as follows.
Suppose, in \eqref{eq:martingale}, we condition on the random variables 
$\badpoints(P_0),\ldots,\allowbreak\badpoints(P_t)$, rather than simply on $Z_0,\ldots,Z_t$.  
This is more stringent conditioning, since the former random variables determine 
the latter.  It is enough to establish \eqref{eq:martingale} under the more stringent 
conditioning.  So fix $\badpoints(P_0)=B_0,\ldots,\badpoints(P_t)=B_t$, and note that
this choice also fixes the resampling sets $S_0,\ldots,S_t$.
Suppose $Z_t=|\badpairs(P_t)|=k$.  Inside the resampling set $S_t$ we
have a Poisson point process $P_{t+1}\cap S_t$ of intensity $\lr$.
Outside, by Lemma~\ref{lem:main}, 
there is a realisation of the hard disks process.  Since we are seeking
an upper bound on $Z_{t+1}$ we may, by Lemma~\ref{lem:dominate},
replace $P_{t+1}\cap\Sbar_t$ by a Poisson point process of intensity~$\lr$.

Let $k'=\Ex(Z_{t+1}\mid Z_0,\ldots,Z_t)$.  From the above considerations we have
\begin{equation}\label{eq:doubleint}
  k'\leq \int_{S_t}\lr\int_{[0,1]^2}\lr\,\mathbf{1}_{\|x-y\|\leq2r}\,dy\,dx.
\end{equation}
This is an overestimate, as we are double-counting overlapping disks
whose centres both lie within~$S_t$.   Now, $S_t$ is a union of at most $2k$ disks of
radius~$2r$.  Thus
\begin{align}
k'&\leq 2k\lr^2\int_{D_{2r}(\origin)}\int_{\rset^2}\mathbf{1}_{\|x-y\|\leq2r}\,dy\,dx\label{eq:kprimeineq}\\
&=2k\lr^2\int_{D_{2r}(\origin)}\int_{D_{2r}(x)}dy\,dx\notag\\
&=2k\lr^2\times 4\pi r^2 \times 4\pi r^2\notag\\
&=32\lambda^2k.\notag
\end{align}
There are further sources of slack here:  there may be fewer than $2k$ disks,
the disks comprising $S_t$ certainly
overlap, and, for points $x$ near the boundary, some of disks $D_{2r}(x)$ will lie
partly outside the unit square. (The last of these presumably has no effect asymptotically,
as $r\to0$.)
Setting $\lc=1/(4\sqrt 2)=0.17677+$, we see that $\alpha=k/k'>1$ for any $\lambda<\lc$,
and $(Z_t)_{t=0}^\infty$ is a supermartingale, as required.
\end{proof}

The constant $\lc$ may seem quite small.  Note, however, that classical rejection sampling
cannot achieve any $\lc>0$.  The argument goes as follows.  Divide $[0,1]^2$
into $r\times r$ squares.  If there are two points in the same square then they will certainly
be less than distance $2r$ apart.  The number of points in each square is Poisson distributed
with mean $\lambda/\pi$.  Thus for any $\lambda>0$ the probability that a particular
square has at least two points is bounded away from zero.  The number of points in
each square is independent of all the others.
It follows that the runtime of classical rejection sampling is exponential in $r^{-2}$.

The above derivation for $\lc$ is quite crude and can be improved.
\begin{lemma}\label{lem:runtime-2}
The constant $\lc$ in Lemma~\ref{lem:runtime} can be taken to be $0.21027$.
\end{lemma}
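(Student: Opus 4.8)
The plan is to return to the proof of Lemma~\ref{lem:runtime} and sharpen the chain of crude estimates \eqref{eq:doubleint}--\eqref{eq:kprimeineq} that produced the contraction \eqref{eq:martingale}. Two of the three sources of slack flagged there are worth recovering: the resampling set~$S_t$ was treated as $2k$ \emph{disjoint} disks of radius~$2r$, whereas it is a Minkowski sum whose constituent disks overlap in lenses; and the double integral in \eqref{eq:doubleint} over-counts pairs of resampled points both of which land in~$S_t$. (The third, that the disks $D_{2r}(x)$ with~$x$ near $\partial[0,1]^2$ spill out of the square, I would again discard as asymptotically negligible.) If we replace the factor~$32$ in \eqref{eq:kprimeineq} by a smaller constant~$c$ with $1/\sqrt c$ at least $0.21027$, the supermartingale argument and the $O(\log r^{-1})$ conclusion of Lemma~\ref{lem:runtime} go through unchanged.

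\textbf{Step 1: a smaller bound on $|S_t|$.} Since $S_t = B_t + D_{2r}(\origin) = \bigcup_{u\in B_t}D_{2r}(u)$ and every point of~$B_t$ is an endpoint of at least one bad pair, $S_t$ is covered by the $k = |\badpairs(P_t)|$ sets $D_{2r}(u_e)\cup D_{2r}(v_e)$ indexed by the bad pairs $e = \{u_e,v_e\}$ of~$P_t$. For each such~$e$ we have $\|u_e - v_e\| < 2r$, so the two radius-$2r$ disks overlap, and an elementary lens computation bounds the overlap area below by a constant $L > 0$ proportional to~$r^2$; its infimum, approached as $\|u_e-v_e\|\to 2r$, is $(\tfrac{8\pi}{3} - 2\sqrt3)r^2$, though a cleaner lower bound of the same order also serves. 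Hence $|D_{2r}(u_e)\cup D_{2r}(v_e)|\le 8\pi r^2 - L$ and, summing over the bad pairs, $|S_t|\le k\,(8\pi r^2 - L)$, an improvement on $|S_t|\le 2k\cdot 4\pi r^2$.

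\textbf{Step 2: removing the double-count, and combining.} Conditioning exactly as in the proof of Lemma~\ref{lem:runtime}, write $A = P_{t+1}\cap S_t$ (a fresh Poisson process of intensity~$\lr$ on~$S_t$) and $C = P_{t+1}\cap\Sbar_t$, which is independent of~$A$ and, by Lemma~\ref{lem:dominate}, stochastically dominated by a Poisson process of intensity~$\lr$ on~$\Sbar_t$. Since $C$ contains no bad pair internally, standard Poisson moment formulas together with Lemma~\ref{lem:dominate} give $\Ex Z_{t+1}\le\tfrac12\lr^2 I_{SS} + \lr^2 I_{S\bar S}$, where $I_{SS} = \int_{S_t}|D_{2r}(x)\cap S_t|\,dx$ and $I_{S\bar S} = \int_{S_t}|D_{2r}(x)\cap\Sbar_t|\,dx$; in particular $I_{SS}+I_{S\bar S} = \int_{S_t}|D_{2r}(x)\cap[0,1]^2|\,dx\le 4\pi r^2|S_t|$. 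Because each $x\in S_t$ lies in some $D_{2r}(u)\subseteq S_t$ with $u\in B_t$, the same lens bound yields $|D_{2r}(x)\cap S_t|\ge|D_{2r}(x)\cap D_{2r}(u)|\ge L$, i.e.\ $I_{SS}\ge L|S_t|$. Hence $\Ex Z_{t+1}\le\lr^2(I_{SS}+I_{S\bar S}) - \tfrac12\lr^2 I_{SS}\le\lr^2|S_t|\,(4\pi r^2 - \tfrac12 L)\le\tfrac12\lr^2 k\,(8\pi r^2 - L)^2$. Writing $\lr = \lambda/(\pi r^2)$, this reads $\Ex(Z_{t+1}\mid Z_0,\dots,Z_t)\le c\,\lambda^2 Z_t$ with $c = (8\pi - L/r^2)^2/(2\pi^2)$; substituting the lens value makes $1/\sqrt c$ at least the claimed $0.21027$, so for $\lambda < 0.21027$ the sequence $Z_0,\alpha Z_1,\alpha^2 Z_2,\dots$ with $\alpha = 1/(c\lambda^2) > 1$ is a supermartingale, and the remainder of the proof of Lemma~\ref{lem:runtime} applies verbatim.

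The step I expect to be the main obstacle is the geometric bookkeeping: confirming that the worst case for the combined bound is, up to asymptotically vanishing corrections, a perfect matching of bad pairs at mutual separation approaching~$2r$ whose associated ``dumbbells'' are pairwise far apart — this simultaneously maximizes $|S_t|$ and minimizes the double-counting correction — and pinning down a lens lower bound that is at once valid for every separation below~$2r$ and clean enough to deliver the stated numerical constant. A lesser technical point is the justification of the independence-and-domination structure behind $\Ex Z_{t+1}\le\tfrac12\lr^2 I_{SS} + \lr^2 I_{S\bar S}$, but this follows from the intermediate measure of \eqref{eq:step1} together with Lemma~\ref{lem:dominate}, so it only requires assembling pieces already in hand.
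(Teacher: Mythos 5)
Your argument is correct, but it is a genuinely different route from the one the paper takes, and it actually proves a marginally stronger constant. The paper keeps the cover of $S_t$ by $2k$ balls of radius $2r$ and only recovers the double-counting of unordered pairs $\{x,y\}$ with \emph{both} points falling in the same ball, leading to the bound $k'\le 2k\lambda^2(8+6\sqrt3/\pi)\approx 22.61\,\lambda^2 k$ and hence $\lc\approx 0.21027$. You instead cover $S_t$ by $k$ ``dumbbells'' (pairs of overlapping balls, one per bad pair), which improves the area bound to $|S_t|\le k(8\pi r^2-L)$; and you split $\Ex Z_{t+1}$ exactly into $\tfrac12\lr^2 I_{SS}+\lr^2 I_{S\bar S}$ and exploit the uniform lower bound $|D_{2r}(x)\cap S_t|\ge L$ for $x\in S_t$. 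Combining $|S_t|\le k(8\pi r^2-L)$ with $I_{SS}\ge L|S_t|$ and $I_{SS}+I_{S\bar S}\le 4\pi r^2|S_t|$ gives $\Ex Z_{t+1}\le\tfrac12\lr^2 k(8\pi r^2-L)^2\approx 20.71\,\lambda^2 k$, i.e.\ $\lc\approx 0.2197$. Two remarks. First, your worry about ``confirming the worst case'' is unnecessary: each of your four inequalities holds unconditionally, so no extremal configuration needs to be identified --- the numerical verification of $(8\pi - L/r^2)^2/(2\pi^2) < 0.21027^{-2}$ is all that remains, and it holds. Second, the small gap you flag about independence and domination is indeed easily discharged from Lemma~\ref{lem:main} and the resampling step: $P_{t+1}\cap S_t=P^S$ is sampled afresh and thus independent of $P_{t+1}\cap\Sbar_t=P_t\cap\Sbar_t$, and Lemma~\ref{lem:dominate} (whose proof localises to any sub-region) dominates the latter by a Poisson process on $\Sbar_t$.
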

\begin{proof}
For each of the $2k$ disks, the right-hand side of inequality \eqref{eq:kprimeineq} 
counts pairs of points $(x,y)$ with $x$ in the disk, and $y$ anywhere within
distance $2r$ of~$x$.  Since a bad event is determined by an \emph{unordered}  
pair of points, this gives rise to significant double counting.  In particular,
pairs $(x,y)$ with $x$ and $y$ lying in the same ball are double counted.  We can 
subtract off these pairs to obtain a better estimate.

For a single ball, the correction is 
\[
C=\frac12\int_{D_{2r}(\origin)}\lr\int_{D_{2r}(\origin)}\lr\mathbf{1}_{\|x-y\|\leq2r}\,dy\,dx.
\]
(The initial factor of one half arises because we want to count unordered pairs.)
With the change of variables $x=2rx'$ and $y=2ry'$ this expression simplifies to 
\begin{align*}
C&=\frac12\times 16r^4\lr^2\int_{D_{1}(\origin)}\int_{D_{1}(\origin)}\mathbf{1}_{\|x'-y'\|\leq1}\,dy'\,dx'\\
&=8\lr^2r^4\int_{D_{1}(\origin)}L(\|x'\|)\,dx',
\end{align*}
where $L(\|x'\|)$ is the area of the ``lens'' $D_{1}(\origin)\cap D_{1}(x')$.
Letting $\rho$ denote the offset of the centres of the two disks, the area of the lens is given by 
\[
L(\rho)=2\arccos(\rho/2)-\tfrac12\rho\sqrt{4-\rho^2}.
\]
(This is by elementary geometry:  the lens is the intersection of 
two sectors, one from each of the disks, and its area can be 
computed by inclusion-exclusion.)
An illustration (before shifting $y$ to $\mathbf{0}$) is given in Figure~\ref{fig:illustration-offset}.
The shaded area is $L$.

\begin{figure}[htbp]
  \centering
    \begin{tikzpicture}[scale=0.7, inner sep=2pt, transform shape]
      \draw (0,0) ellipse (3 and 3) [thick];
      \draw (0,0) node [draw,fill,shape=circle,color=black, label=0:{\LARGE $x$}] (x) {}; 
      \draw (-2,0) ellipse (3 and 3) [thick];
      \draw (-2,0) node [draw,fill,shape=circle,color=black, label=180:{\LARGE $y$}] (y) {}; 
      \draw (-1,-1.5) node {\LARGE $L$};

      \draw (x) edge [<->] node [label=90:{\LARGE $\rho$}] {} (y);
      \draw (x) edge [->] node [label=-15:{\LARGE $2r$}] {} (2.121,2.121);
       \begin{scope}[on background layer]
        \clip (0,0) ellipse (3 and 3);
        \clip (-2,0) ellipse (3 and 3);
        \fill[black!20!white] (0,0) circle [radius = 3];
      \end{scope}
    \end{tikzpicture}  
  \caption{An illustration of double counting.}
  \label{fig:illustration-offset}
\end{figure}
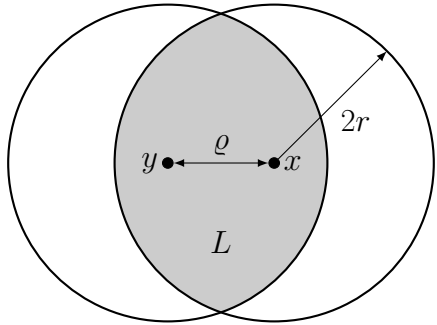

Translating to polar coordinates $(\rho,\theta)$, 
\begin{align*}
C&=8\lr^2r^4\int_{D_1(\origin)}L(\|x'\|_2)\,dx'\\
&=8\lr^2r^4\int_0^{2\pi}\!\!\int_0^1\rho\,L(\rho)\,d\rho\,d\theta\\
&=\frac{8\lambda^2}{\pi^2}\times2\pi\int_0^1\rho\,L(\rho)\,d\rho\\
&=\frac{16\lambda^2}{\pi}
  \left[\frac\pi2+(\rho^2-1)\arccos\Big(\frac\rho2\Big)
  -\Big(\frac\rho4+\frac{\rho^3}8\Big)\sqrt{4-\rho^2}\,\right]_0^1\\
&=\bigg(8-\frac{6\sqrt3}\pi\,\bigg)\lambda^2.
\end{align*}
(The integral was evaluated using the Maple computer algebra system.)  
Our revised upper bound on $k'$ is thus
\begin{align}\label{eqn:contraction}
k'\leq 2k(16\lambda^2-C)=2k\lambda^2\bigg(8+\frac{6\sqrt3}\pi\,\bigg),
\end{align}
Solving
\[
\lc^2\bigg(16+\frac{12\sqrt3}\pi\bigg)=1
\]
yields the improved bound of $\lc=0.21027+$.
\end{proof}
There are other factors that could in principle be used to increase $\lc$ further ---
each disk necessarily overlaps with at least one other disk, some bad events are triple
or quadruple counted --- but the computational difficulties rapidly increase
when attempting to account for these.

We have just seen that the number of iterations of the while-loop in Algorithm~\ref{alg:PRS} is small 
when $\lambda$ is below some threshold.  It just remains to check that the body of the loop can be implemented 
efficiently.  

\newcommand\bfi{\mathbf{i}}
\newcommand\bfj{\mathbf{j}}
\newcommand\infnorm[1]{\|#1\|_\infty}
\newcommand\pairs{\mathcal{P}}
\newcommand\calI{\mathcal{I}}

\begin{lemma}\label{lem:runtimeloop}
There is an implementation of Algorithm~\ref{alg:PRS} such that, for any fixed $\lambda<\lc$,  
the expected runtime of the algorithm is $O(r^{-2})$.
\end{lemma}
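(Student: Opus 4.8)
The plan is to show that one iteration of the while-loop can be carried out in expected time $O(r^{-2})$, and then combine this with the $O(\log r^{-1})$ bound on the number of iterations (Lemma~\ref{lem:runtime-2}) to get a total expected runtime of $O(r^{-2}\log r^{-1})$ --- wait, that is not quite $O(r^{-2})$, so the real work is to be more careful. The correct strategy is an amortised one: the cost of an iteration should be charged against the number of points \emph{resampled} in that iteration, i.e.\ roughly $|S_t|\cdot\lr \sim |B_t|$, and we must argue that the total number of points ever resampled, summed over all iterations, is $O(r^{-2})$ in expectation. This follows from the supermartingale estimate in the proof of Lemma~\ref{lem:runtime}: since $\Ex Z_t \le \tfrac12\lr^2\alpha^{-t}$ and $|B_t|\le 2Z_t$, we have $\sum_t \Ex|B_t| \le \lr^2\sum_t\alpha^{-t} = O(\lr^2) = O(r^{-2})$. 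So the total resampling work, \emph{if each resampled point costs $O(1)$}, is $O(r^{-2})$ in expectation, and there is an additive $O(r^{-2})$ for generating $P_0$ and for the per-iteration overhead (which I will need to bound by the number of iterations times a constant, giving $O(\log r^{-1})$, absorbed into $O(r^{-2})$).

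The key implementation device is a \textbf{spatial grid}. Maintain a partition of $[0,1]^2$ into $\lceil 1/(2r)\rceil^2$ cells of side $\Theta(r)$, with each cell storing a list of the points of the current $P$ it contains. Two observations make this work: (i) a point $x$ can only form a bad pair with points in its own cell or the eight neighbouring cells, so $\badpairs$ can be recomputed locally; (ii) since $\badpoints(P_t) = B_t$ has no bad pairs outside $S_t$, after the first iteration the points lying outside $S_t$ are a hard-disks configuration, hence each cell contains $O(1)$ points outside $S_t$ --- so local bad-pair checks touch only $O(1)$ points per cell. I would describe the per-iteration operations as follows: (a) form $B_t = \badpoints(P_t)$ and the resampling set $S_t = B_t + D_{2r}(\origin)$, represented as the list of cells it meets (each disk of radius $2r$ meets $O(1)$ cells, so $S_t$ meets $O(|B_t|)$ cells); (b) delete from the grid all points of $P_t$ lying in $S_t$ --- these are exactly the points of $B_t$, of which there are $|B_t|$; (c) sample $P^S$, a Poisson process of intensity $\lr$ on $S_t$ (sample a Poisson number of points with mean $\lr\cdot\mathrm{area}(S_t) = O(|B_t|)$, place each uniformly in $S_t$, insert into the grid), with expected cost $O(|B_t|)$; (d) recompute bad pairs among the new and affected points by scanning, for each newly inserted point, its cell and the eight neighbours --- cost $O(1)$ per new point by observation (ii), so $O(|P^S|)=O(|B_t|)$ expected. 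Termination testing is free if we maintain a running count of bad pairs. Summing, the expected cost of iteration $t$ is $O(\Ex|B_t| + 1)$, and the running total telescopes against $\sum_t \Ex|B_t| = O(r^{-2})$ as above, plus $O(\log r^{-1})$ for the per-iteration $O(1)$ terms.

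The main obstacle is the bookkeeping needed to make precise that ``each cell contains $O(1)$ points outside the resampling set, in expectation,'' and more importantly that the local recomputation in step (d) genuinely costs $O(1)$ per point \emph{in expectation} even when a new point happens to land in a dense region. This needs the hard-disks domination (Lemma~\ref{lem:dominate}) together with the fact that a Poisson process of intensity $\lr$ puts an expected $O(1)$ points in any $O(r)\times O(r)$ region; one also has to handle the first iteration, where $P_0$ itself can have cells with many points, by noting that the total work of building the grid for $P_0$ and scanning all of its $|P_0| = O(r^{-2})$ points (with $\Ex|P_0|^2 = O(r^{-4})$ a priori, but the \emph{local} scans cost only $\sum_{\text{cells}} (\text{points in cell})^2$ whose expectation is $O(r^{-2})$ since within a single cell the point count is Poisson with mean $O(1)$ and there are $O(r^{-2})$ cells). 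Finally, one should remark that the Poisson-process sampling primitives (a Poisson-distributed count, uniform points in a cell or in a union of disks) are taken as unit-cost, or, if one prefers a bit-complexity model, contribute only polylogarithmic factors that do not affect the stated bound. These are all routine once the grid structure and the amortisation scheme are in place.
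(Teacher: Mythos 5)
Your high-level plan matches the paper: a grid of $\Theta(r)$-side cells, bad pairs confined to nearby cells, per-iteration work charged against bad points/pairs, and the supermartingale from Lemma~\ref{lem:runtime} giving $\sum_t \Ex Z_t = O(\Ex Z_0) = O(r^{-2})$. However, there is a genuine gap exactly where you flag the ``main obstacle,'' and your sketch of how to close it does not actually close it.

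The problem is step~(d). You claim the local scan costs $O(1)$ per newly inserted point ``by observation~(ii),'' but observation~(ii) only controls cells \emph{outside} the resampling set $S_t$. Inside $S_t$ the cells carry a fresh Poisson process, and the scanning cost for a cell $\Gamma_\bfi$ is not linear in the number of new points but involves products like $n_\bfi n_\bfj$ (and $n_\bfi b_\bfj$, $n_\bfi p_\bfj$) over neighbouring pairs. You notice this for $P_0$ and bound $\sum_\bfi(\text{points in }\Gamma_\bfi)^2$ in expectation using Poisson second moments over all $O(r^{-2})$ cells; but for $t\geq1$ the same issue persists inside $S_t$, and the analogous bound would be $\Ex\bigl[\sum_{\bfi\text{ near }B_{t-1}} n_\bfi^2\bigr]=O(\Ex|B_{t-1}|)$ --- an estimate that requires conditioning on $B_{t-1}$ and then unconditioning, while the per-iteration runtime $T_t$ is simultaneously correlated with $Z_t$ (which also depends on the very same $n_\bfi$'s). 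You neither resolve this correlation nor articulate a device that avoids it.

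The paper's proof resolves it with a deterministic charging trick that your proposal is missing: any two points landing in the same grid square necessarily form a bad pair, so $\sum_\bfi\binom{n_\bfi}{2}\leq Z_t$ and $\sum_\bfi\binom{b_\bfi}{2}\leq Z_{t-1}$ \emph{pathwise}. Combined with $ab\leq 1+2\binom a2+2\binom b2$, this turns every product term in the scanning cost into a sum that is bounded deterministically by $O(Z_{t-1}+Z_t)$ plus $O(|L|)=O(Z_{t-1})$, with no conditional expectations. The one piece that cannot be bounded pathwise --- the rejected candidate points $n'_\bfj$ --- is handled separately: those points are discarded before the end of the iteration, so they are independent of the algorithm's future evolution and can be bounded in expectation without disturbing the supermartingale. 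Your proposal would need to import this deterministic $\binom{\cdot}{2}$-to-$Z$ bound (or something equivalent) to go through; as written, the amortisation against $\sum_t|B_t|$ does not account for the quadratic blow-up inside $S_t$ and does not justify interchanging expectation with the random stopping time.
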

\begin{proof}
No sophisticated data structures are required, but the runtime analysis requires some work.

Divide the unit square into a grid of $r\times r$ squares.  
Index the grid squares by $\calI=[0,n]^2$, where $n\approx r^{-1}$.  
Let the grid squares be $\{\Gamma_\bfi:\bfi\in \calI\}$.  
Note that if two points of a point set $P$ lie in the same grid square 
then they will necessarily form a bad pair and lie in $\badpairs(P)$.  
Moreover, if $(x,y)\in\badpairs(P)$ and $x\in \Gamma_\bfi$ and $y\in\Gamma_\bfj$ then necessarily $\infnorm{\bfi-\bfj}\leq2$.  
So it seems, intuitively, that we should be able to implement the body of loop to run in time $O(n^2)=O(r^{-2})$, 
since we only have to examine $O(n^2)$ pairs of grid squares in order to complete the computation.  
Given Lemma~\ref{lem:runtime}, this would give an upper bound on total runtime of $O(r^{-2}\log r^{-1})$.

However, we can do better than this by noting that the amount of work to be done in each iteration decays exponentially. 
Recall the notation employed in Algorithm~\ref{alg:PRS}.
Assume that, in addition to $P$ and $B=\badpoints(P)$, we maintain a list~$L$ of grid squares containing points in~$B$.  
In order to perform the computations within the while-loop, it is only necessary to consider grid squares that are within constant distance of a grid square in~$L$.
As the length of the list~$L$ decays exponentially, according to Lemma~\ref{lem:runtime}, 
we expect the overall runtime to be $O(r^{-2})$ rather than just $O(r^{-2}\log r^{-1})$.  
This is indeed the case.  
However, there is a technical issue;  the time taken for the $t$th iteration of the loop is a random variable, say $T_t$.  
Recall the notation of Lemma~\ref{lem:runtime}, in particular that $Z_t=|\badpairs(P_t)|$ denotes the number of bad pairs
of points (ones within distance $2r$ of each other) after $t$ iterations of the loop.  
As may be expected, we can bound the expectation of~$T_t$ by a linear function of $Z_{t-1}$.  
Unfortunately, we can't just sum these estimates over $t\in\{1,T\}$ to get an upper bound on expected total runtime,
as the random variable $T_t$ is presumably correlated with $Z_t$.  
Instead, we shall ``charge'' the operations within the $t$th iteration of the loop either to $Z_{t-1}$ or $Z_t$.
and hence bound the expected runtime by $O(Z_0+Z_1+\cdots+Z_T)=O(r^{-2})$.

Consider the work done during the $t$th iteration of the while-loop.  
Assume that the point sets $P=P_{t-1}$ and~$B=B_{t-1}$ are available either from the initialisation phase 
or from the previous iteration of the loop.  
Also assume that we have the list~$L$ containing grid squares containing 
points in~$B$.  Note that $|L|\leq 2Z_{t-1}$, since each bad pair contributes at most two bad points.  
To compute $P^S$, we could sample a realisation of the Poisson point process in the unit square
and reject all those points that are not within distance $2r$ of some point in~$B$. 
However, this would be inefficient.   Instead, we just produce realisations of a Poisson point process
within grid squares that are within distance two of a grid square in~$L$. 
Here, we take the distance between grid squares $\Gamma_\bfi$ and $\Gamma_\bfj$ to be 
$\infnorm{\bfj-\bfi}$.
We then reject each new point unless it is within distance $2r$ of a point in~$B$;  the result is the 
required point set~$P^S$.

Let 
\begin{align*}
b_\bfi&=|B_{t-1}\cap\Gamma_\bfi|,\\
n_\bfi&=|P^S_t\cap\Gamma_\bfi|,\quad\text{and}\\
p_\bfi&=|P_t\cap\Gamma_\bfi|.
\end{align*}
Thus, restricted to grid square $\Gamma_\bfi$, we have that
$b_\bfi$ is the number of bad points carried forward from the previous iteration,
$n_\bfi$ is the number of fresh points added during the current iteration, 
and $p_\bfi$ is the total number of points at the end of the current iteration.
Also, let $n'_\bfi$ be the number of points generated during the current 
iteration that did not survive because they were not within distance $2r$ of some bad point.

Let $\pairs=\{(\bfi,\bfj):b_\bfi>0\text{ and }\infnorm{\bfj-\bfi}\leq2\}$.  
Note that, in order to compute the points that need to be added to~$P$ during the 
current iteration, only pairs of grid squares
$\{(\Gamma_\bfi,\Gamma_\bfj):(\bfi,\bfj)\in\pairs\}$ 
need to be examined.  (Each new point must be within distance $2r$ of an existing bad point.)
The work done during iteration $t$ in computing $P_t$ is then proportional to
$$
\sum_{(\bfi,\bfj)\in\pairs}1 + b_\bfi(n_\bfj+n'_\bfj)=A+B+C,
$$
where 
$$
A=\sum_{(\bfi,\bfj)\in\pairs}1,\quad B=\sum_{(\bfi,\bfj)\in\pairs}b_\bfi n_\bfj\quad\text{and}\quad
C=\sum_{(\bfi,\bfj)\in\pairs}b_\bfi n'_\bfj.
$$
Here, $A$ represents the fixed cost of cycling through all the relevant pairs of grid squares, and $A+B$
is the additional time required to examine each fresh point and see whether it needs to be retained 
(i.e., added to $P^S$).  

Now $|\pairs|\leq25|L|\leq50Z_{t-1}$, and so $A=O(Z_{t-1})$.  Also, $n'_\bfj$ is stochastically dominated
by a Poisson random variable with mean $r^2\lr=\lambda/\pi=O(1)$;  thus 
$$
\Ex(C)=\sum_{\bfi:b_\bfi>0}25\lambda/\pi=O(Z_{t-1}).
$$
Note that the $n'_\bfj$ new points in grid square $\Gamma_\bfj$ are freshly generated during the 
current iteration, and are discarded before the end of the iteration, and so have no 
effect on the subsequent evolution of the algorithm;  the sequence $Z_0,\alpha Z_1,\alpha^2 Z_2,\ldots$ 
remains a supermartingale, and the analysis in Lemma~\ref{lem:runtime} is unaffected.  (All other 
estimates will be deterministic, so this is the only point where we need to consider the
potential for conditioning.)

The remaining term $B$ may be bounded as follows:
\begin{align*}
B&=\sum_{(\bfi,\bfj)\in\pairs}b_\bfi n_\bfj\\
&\leq \sum_{(\bfi,\bfj)\in\pairs}1+2\binom{b_\bfi}2+2\binom{n_\bfj}2\\
&\leq \sum_{\bfi:b_\bfi>0}25+50\sum_{\bfi}\binom{b_\bfi}2 
+50\sum_{\bfj}\binom{n_\bfj}2\\
&=O(Z_{t-1})+O(Z_{t-1})+O(Z_t),
\end{align*}
Here we use the fact that any pair of distinct points in a single grid square is certainly a bad pair.

The final task is to compute the new set of bad points.  Since each of the new bad pairs must involve 
at least one point in~$P^S$,
the time to complete this task is proportional to
$$
\sum_{\pairs'}1+ n_\bfi p_\bfj\leq A'+B'+C'
$$
where 
$$
\pairs'=\{(\bfi,\bfj):n_\bfi>0\text{ and }\infnorm{\bfj-\bfi}\leq2\}
$$
and 
$$
A'=\sum_{\pairs'}2,\quad B'=\sum_{\pairs'}2\binom{n_\bfi}2 \quad\text{and}\quad C'=\sum_{\pairs'}2\binom{p_\bfj}2.
$$
Now any grid square $\Gamma_\bfi$ with $n_\bfi>0$ must be within distance 2 of a grid square in 
the list~$L$.  Therefore, by similar reasoning to that used earlier,
\begin{align*}
A'&=\sum_{\bfi:n_\bfi>0}25\times 2=25|L|\times50\leq 2500Z_{t-1},\\
B'&\leq \sum_{\bfi}25\times 2\binom{n_\bfi}2\leq 50 Z_t\\
\noalign{\noindent and} 
C'&\leq\sum_{\bfj} 25\times 2\binom{p_\bfj}2\leq 50 Z_t.
\end{align*}
So again, the time for this phase of the loop is bounded by 
$$A'+B'+C'=O(Z_{t-1})+O(Z_{t})+O(Z_t)=O(Z_{t-1})+O(Z_t).$$

It remains to analyse the initialisation phase.  
Generating the realisation of a Poisson point process of intensity $\lr$ in the unit square clearly takes time $O(r^{-2})$.  
To identify the bad pairs, we cycle through pairs of grid squares $(\Gamma_\bfi.\Gamma_\bfj)$ with $\infnorm{\bfj-\bfi}\leq2$;  
there are $O(r^{-2})$ of these.  
An argument identical to those above shows that the time taken to identify the bad pairs is $O(Z_0)$.  
We are almost done, but we do need a better estimate for $Z_0$ that the one used in Lemma~\ref{lem:runtime}, which was $O(r^{-4})$.  (This crude estimate was adequate at the time, as we were only interested in the logarithm of this quantity.)
But we can now see that bad pairs can only come from pairs of grid squares separated by distance at most two.  
There are $O(r^{-2})$ of these, and each of them generates $O(1)$ bad pairs in expectation, so that $Z_0=O(r^{-2})$.
We saw in Lemma~\ref{lem:runtime} that $Z_0,\alpha Z_1,\alpha^2Z_2$, is a supermartingale with $\alpha>1$ (with the convention that $Z_t=0$ for $t>T$).  
Thus the overall runtime of Algorithm~\ref{alg:PRS} is 
$$O\big(\Ex(Z_0+Z_1+Z_2+\cdots)\big)=\frac\alpha{\alpha-1}\,O(\Ex(Z_0))=O(r^{-2})$$ 
in expectation, assuming $\lambda<\lc$.
\end{proof}

Theorem \ref{thm:runtime} is obtained by combining Lemmas~\ref{lem:runtime}, \ref{lem:runtime-2} and \ref{lem:runtimeloop}.

\section{Three or more dimensions}

In higher dimensions, the hard disk model is known as the hard spheres model.
Everything in Sections \ref{sec:correctness} and~\ref{sec:runtime} carries 
across to $d>2$ dimensions with little change.  For general $d$, the appropriate 
scaling for the intensity is $\lrd=\lambda/(v_dr^d)$, where $v_d$ is the 
volume of a ball of unit radius in $d$~dimensions.  Note that in a realisation
of a Poisson point process with intensity $\lrd$, the expected number of points in
a ball of radius~$r$ is~$\lambda$. 

The analogue of equation~\eqref{eq:doubleint} is 
\[
k'\leq \int_{S_t}\lrd\int_{[0,1]^d}\lrd\,\mathbf{1}_{\|x-y\|\leq2r}\,dy\,dx,
\]
which leads to
\[
k'\leq 2^{2d+1}\lambda^2k.
\]
So setting $\lc=2^{-(d+\frac12)}$ we find that $\alpha =k/k'>1$ for any $\lambda<\lc$.
It follows that the runtime of partial rejection sampling 
is $O(\log r)$ for any $\lambda<\lc$.

By a result of Jenssen, Joos and Perkins~\cite{JJP19}, 
we lose just a constant factor when translating 
from intensity~$\lambda$ to packing density~$\alpha$.  
(It is partly to connect with their work, we measure intensity in terms of the expected number of points
in a ball of radius~$r$.)  In the proof of \cite[Thm~2]{JJP19}, the following 
inequality is derived:
\[
\alpha\geq\inf_z\max\big\{\lambda e^{-z},\,2^{-d}\exp[-2\cdot3^{d/2}\lambda]\cdot z\big\}.
\]
Assuming $\lambda\leq\lc$, which holds in the range of validity of 
our algorithm, we have $\sqrt2\lambda\leq2^{-d}$ and hence
\begin{align*}
\alpha&\geq\inf_z\max\big\{\lambda e^{-z},\,\sqrt2\lambda\exp[-\sqrt2(3/4)^{d/2}]\cdot z\big\}\\
&=c_d\lambda,\\
\noalign{\noindent where}
c_d&=\inf_z\max\big\{e^{-z},\,\sqrt2\exp[-\sqrt2(3/4)^{d/2}]\cdot z\big\}.
\end{align*}
Note that $(c_d)$ is monotonically increasing, with $c_2=0.42220+$, and $\lim_{d\to\infty}c_d=0.63724+$.
It follows that we can reach expected packing 
density $\Omega(2^{-d})$ with $O(\log r^{-1})$ expected iterations.  
This is currently the best that 
can be achieved by any provably correct sampling algorithm with polynomial (in $1/r$) 
runtime~\cite{KMM03}.  The asymptotically 
best packing density currently rigorously known is $d2^{-d}$, but achieving this 
would require $\lambda$ to grow exponentially fast in~$d$.
This is clearly beyond the capability of partial 
rejection sampling, but also beyond the capability of any known efficient sampling algorithm. 

\section*{Acknowledgements}

We thank Mark Huber and Will Perkins for inspiring conversations and bringing the hard disks model to our attention.

\bibliographystyle{plain}
\bibliography{PRS}

\begin{thebibliography}{10}

\bibitem{Coh17}
Henry Cohn.
\newblock A conceptual breakthrough in sphere packing.
\newblock {\em Notices Amer. Math. Soc.}, 64(2):102--115, 2017.

\bibitem{CKMRV17}
Henry Cohn, Abhinav Kumar, Stephen~D. Miller, Danylo Radchenko, and Maryna
  Viazovska.
\newblock The sphere packing problem in dimension 24.
\newblock {\em Ann. of Math. (2)}, 185(3):1017--1033, 2017.

\bibitem{EngelEtAl}
Michael Engel, Joshua~A. Anderson, Sharon~C. Glotzer, Masaharu Isobe,
  Etienne~P. Bernard, and Werner Krauth.
\newblock Hard-disk equation of state: First-order liquid-hexatic transition in
  two dimensions with three simulation methods.
\newblock {\em Phys. Rev. E}, 87:042134, Apr 2013.

\bibitem{HardDisksICALP}
Heng Guo and Mark Jerrum.
\newblock Perfect simulation of the hard disks model by partial rejection
  sampling.
\newblock In {\em {ICALP}}, volume 107 of {\em LIPIcs}, pages 69:1--69:10.
  Schloss Dagstuhl - Leibniz-Zentrum fuer Informatik, 2018.

\bibitem{GJL17}
Heng Guo, Mark Jerrum, and Jingcheng Liu.
\newblock Uniform sampling through the {L}ov\'{a}sz local lemma.
\newblock {\em J. ACM}, 66(3):Art. 18, 31, 2019.

\bibitem{Hal05}
Thomas~C. Hales.
\newblock A proof of the {K}epler conjecture.
\newblock {\em Ann. of Math. (2)}, 162(3):1065--1185, 2005.

\bibitem{HM14}
Thomas~P. Hayes and Cristopher Moore.
\newblock Lower bounds on the critical density in the hard disk model via
  optimized metrics.
\newblock {\em CoRR}, abs/1407.1930, 2014.

\bibitem{JJP19}
Matthew Jenssen, Felix Joos, and Will Perkins.
\newblock On the hard sphere model and sphere packings in high dimensions.
\newblock {\em Forum Math. Sigma}, 7:e1, 19, 2019.

\bibitem{KMM03}
Ravi Kannan, Michael~W. Mahoney, and Ravi Montenegro.
\newblock Rapid mixing of several {M}arkov chains for a hard-core model.
\newblock In {\em ISAAC}, pages 663--675, 2003.

\bibitem{Kendall98}
Wilfrid~S. Kendall.
\newblock Perfect simulation for the area-interaction point process.
\newblock In {\em Probability towards 2000 ({N}ew {Y}ork, 1995)}, volume 128 of
  {\em Lect. Notes Stat.}, pages 218--234. Springer, New York, 1998.

\bibitem{KM00}
Wilfrid~S. Kendall and Jesper M{\o}ller.
\newblock Perfect simulation using dominating processes on ordered spaces, with
  application to locally stable point processes.
\newblock {\em Adv. Appl. Probab.}, 32(3):844–--865, 2000.

\bibitem{Lindvall}
Torgny Lindvall.
\newblock On {S}trassen's theorem on stochastic domination.
\newblock {\em Electron. Comm. Probab.}, 4:51--59, 1999.

\bibitem{Low00}
Hartmut L{\"o}wen.
\newblock Fun with hard spheres.
\newblock In Klaus~R. Mecke and Dietrich Stoyan, editors, {\em Statistical
  Physics and Spatial Statistics}, pages 295--331, 2000.

\bibitem{MRRTT53}
Nicholas Metropolis, Arianna~W. Rosenbluth, Marshall~N. Rosenbluth, Augusta~H.
  Teller, and Edward Teller.
\newblock Equation of state calculations by fast computing machines.
\newblock {\em J. Chem. Phys.}, 21(6):1087--1092, 1953.

\bibitem{MJM17}
S.~B. Moka, S.~Juneja, and M.~R.~H. Mandjes.
\newblock Perfect sampling for {G}ibbs processes with a focus on hard-sphere
  models.
\newblock {\em ArXiv}, abs/1705.00142, 2017.

\bibitem{Per16}
Will Perkins.
\newblock Birthday inequalities, repulsion, and hard spheres.
\newblock {\em Proc. Amer. Math. Soc.}, 144(6):2635--2649, 2016.

\bibitem{Preston}
Chris Preston.
\newblock Spatial birth-and-death processes (with discussion).
\newblock {\em Bull. Inst. Internat. Statist.}, 46(2):371--391, 405--408
  (1975), 1975.

\bibitem{Strassen}
V.~Strassen.
\newblock The existence of probability measures with given marginals.
\newblock {\em Ann. Math. Statist.}, 36:423--439, 1965.

\bibitem{Str75}
David~J. Strauss.
\newblock A model for clustering.
\newblock {\em Biometrika}, 62(2):467--475, 1975.

\bibitem{Via17}
Maryna~S. Viazovska.
\newblock The sphere packing problem in dimension 8.
\newblock {\em Ann. of Math. (2)}, 185(3):991--1015, 2017.

\bibitem{Wel18}
Jake Wellens.
\newblock A note on partial rejection sampling for the hard disks model in the
  plane.
\newblock {\em ArXiv}, abs/1808.03367, 2018.

\end{thebibliography}

\end{document}